\DeclareMathOperator*{\argmin}{arg\,min}
\numberwithin{equation}{section}
\numberwithin{figure}{section}
\theoremstyle{plain}
\newtheorem{thm}{\protect\theoremname}
\providecommand{\lemmaname}{Lemma}
\providecommand{\theoremname}{Theorem}
\title{Multicontinuum splitting schemes for multiscale wave problems}
\date{}
\author[1,2]{Mohsen Alshahrani}
\author[1]{Buzheng Shan}
\affil[1]{Department of Mathematics, Texas A\&M University, College Station, TX, USA.}
\affil[2]{Department of Mathematics, King Fahd University of Petroleum and Minerals, Dhahran,
31261, Saudi Arabia}
\begin{document}

\maketitle

\begin{abstract}
In this work, we propose multicontinuum splitting schemes for the wave equation with a high-contrast coefficient, extending our previous research on multiscale flow problems. 
The proposed approach consists of two main parts: decomposing the solution space into distinct components, and designing tailored time discretization schemes to enhance computational efficiency.
To achieve the decomposition, we employ a multicontinuum homogenization method to introduce physically meaningful macroscopic variables and to separate fast and slow dynamics, effectively isolating contrast effects in high-contrast cases. This decomposition enables the design of schemes where the fast-dynamics (contrast-dependent) component is treated implicitly, while the slow-dynamics (contrast-independent) component is handled explicitly. The idea of discrete energy conservation is applied to derive the stability conditions, which are contrast-independent with appropriately chosen continua. 
We further discuss strategies for optimizing the space decomposition. These include a Rayleigh quotient problem involving tensors, and an alternative generalized eigenvalue decomposition to reduce computational effort.
Finally, various numerical examples are presented to validate the accuracy and stability of our proposed method. 
\end{abstract}

\section{Introduction}

The multiscale wave equation appears in various scientific and engineering applications, such as modeling wave propagation in heterogeneous media, composite materials, and subsurface environments \cite{virieux1984sh,virieux1986p,abdulle2017multiscale}. The variation in material properties, often spanning several orders of magnitude, poses substantial challenges for numerical simulations. In particular, resolving the multiple scales and high contrast typically necessities extremely fine spatial discretization, leading to prohibitive computational costs. Additionally, explicit methods, which are widely used for the time discretization of wave equations, are constrained by stability conditions that require small time step sizes in the presence of high contrast.
To overcome these difficulties, we propose multicontinuum splitting schemes for multiscale wave problems. These schemes can effectively reduce computational costs, and establish stability conditions which are independent of the contrast under certain conditions.

Numerous methods have been developed to tackle multiscale problems and balance the need for accuracy and efficiency. 
Some approaches, such as homogenization methods \cite{papanicolau1978asymptotic,wu2002analysis,owhadi2008numerical,hornung2012homogenization}, introduce effective properties to capture fine-scale details of the media and approximate the original microscopic equation by a macroscopic one. 
Numerical multiscale methods, on the other hand, include the Multiscale Finite Element Method (MsFEM) \cite{hou1997multiscale, hou1999convergence, efendiev2009multiscale,jiang2010analysis}, the Localized Orthogonal Decomposition (LOD) \cite{maalqvist2014localization,henning2014localized,abdulle2017localized}, the Generalized Multiscale Finite Element Method (GMsFEM) \cite{efendiev2013generalized,chung2014generalized,gao2015generalized}, the Constraint Energy Minimizing GMsFEM (CEM-GMsFEM) \cite{chung2018constraint,cheung2021explicit,chung2023multiscale}, the Heterogenuous Multiscale Method (HMM) \cite{weinan2007heterogeneous,abdulle2012heterogeneous,abdulle2011finite}, the Variational Multiscale Method (VMS) \cite{hughes1998variational}, the Nonlocal Multicontinua Method (NLMC) \cite{chung2018non,vasilyeva2019nonlocal}, hybrid approaches that integrate machine learning techniques \cite{wang2020deep,efendiev2023hybrid,rudikov2025locally}, etc. Most of these methods construct multiscale spaces spanned by basis functions to approximate the solution efficiently. A key distinction among them lies in how the basis functions, or the local cell problems used to derive them, are formulated. For example, MsFEM constructs local homogeneous equations to capture fine-scale information, GMsFEM designs spectral problems to identify dominant modes, and CEM-GMsFEM incorporates the idea of energy minimization.

Our approach is built on the framework of multicontinuum homogenization \cite{efendiev2023multicontinuum,chung2024multicontinuum,ammosov2025multicontinuum,xie2025multicontinuum} and inspired by the partially explicit time discretization approach \cite{chung2021contrast,chung2022contrast,chung2021contrast2}.
In multicontinuum homogenization, the solution is represented as an expansion involving macroscopic variables and multiscale basis functions. Unlike classical homogenization, which typically uses a single macroscopic variable, multiple macroscopic variables are defined to represent local averages of the solution within each continuum. The multiscale basis functions are constructed by solving carefully formulated local energy minimization problems on a fine grid. Assuming that the macroscopic variables are smooth over coarse regions, one can derive a system of macroscopic equations, which will be solved on the coarse grid to approximate the solution.
The partially explicit time discretization approach is a methodology specialized for time-dependent problems. This approach utilizes methods like CEM-GMsFEM to select the multiscale modes. It treats the dominant modes implicitly to ensure the stability, while handling the remaining modes explicitly to reduce computational cost. With a suitable selection of modes, a contrast-independent stability can be achieved.

In this work, we propose multicontinuum splitting schemes for the wave equation with a high-contrast coefficient.
In our approach, multicontinuum homogenization is applied to homogenize and discretize in space to address the spatial multiscale properties, while the idea of splitting methods is employed to discretize in time and handle the varying scales of dynamics in the system.
With the framework of multicontinuum homogenization, we categorize the macroscopic variables into two groups: one capable of describing the fast dynamics (typically associated with continua in high-value regions), and the other describing the slow dynamics (typically associated with continua in low-value regions). Then by using the multicontinuum expansion, we decompose the solution space into two components corresponding to the two groups. This enables the separation of fast and slow dynamics, which are often present in high-contrast problems, where different parts of the system evolve at significantly different rates. 
Based on this decomposition, we construct partially explicit time discretization schemes. The component associated with fast dynamics is handled in an implicit fashion to ensure the stability, while the component associated with slow dynamics is processed explicitly to improve efficiency. A key benefit of these schemes is that the derived stability conditions depend solely on the explicitly treated component. Furthermore, this component is independent of the contrast of the coefficient, as long as the continua are chosen appropriately. This feature indicates that the decomposition indeed isolates the effects of high contrast in this case, resulting in stability conditions that are independent of the contrast. 
In addition, we discuss an optimized decomposition of the solution space, where the decomposition is not predefined as above but instead arises from the mixture of continua. This provides more flexibility in separating dynamics with different speeds, and can relax the stability conditions on the time step size without increasing the computation cost. To achieve this, we formulate a generalized Rayleigh quotient problem in tensor form. We also note that it can be simplified into a more computationally efficient generalized eigenvalue problem under certain assumptions.

Numerical results are presented to demonstrate the accuracy and stability of the proposed schemes. Various coefficient fields and different numbers of continua are considered in the experiments. The results show that the proposed space decomposition effectively separates the slow modes from the overall dynamics. With this decomposition, the multicontinuum splitting schemes achieve a comparable level of accuracy to the implicit scheme, while requiring less computational effort.

This research is an extension of our previous work on multiscale flow problems \cite{efendiev2024multicontinuum} to wave problems. First, we design new time discretization schemes tailored for multiscale wave equations, which involves three time layers instead of two, as used in parabolic equations.
Second, we introduce the concept of discrete energy for our schemes, and show its conservation property, as energy conservation is crucial in wave problems.
Additionally, a proof of the stability is provided based on this property to support the methodology.

In Section \ref{sec:preliminaries}, we introduce the necessary background and review the key concepts of multicontinuum homogenization. In Section \ref{sec:schemes}, we present the multicontinuum splitting schemes and derive the corresponding stability conditions. Section \ref{sec:construction} discusses the optimized construction of space decomposition. Numerical results are provided in Section \ref{sec:numerical}, followed by conclusions in Section \ref{sec:conclusions}.

\section{Preliminaries}
\label{sec:preliminaries}

In this work, we consider the wave equation with a high-contrast positive coefficient $\kappa$ defined in a bounded domain $\Omega \subset \mathbb{R}^d$:
\begin{equation}
\label{eq:strong_form}
\begin{split}
\partial_{tt} u(x,t) - \nabla \cdot (\kappa(x) \nabla u(x,t)) = f(x,t), \quad x \in \Omega, \quad t\in (0,T],
\end{split}
\end{equation}
subject to the boundary and initial conditions
\begin{equation}
\begin{split}
    &u(x,t) = g(x), \quad x \in \partial \Omega, \quad t\in (0,T], \\
    &u(x,0) = u_0(x), \quad \partial_{t} u(x,0) = v_0(x),\quad x \in \Omega.
\end{split}
\end{equation}
For simplicity, we assume zero Dirichlet boundary condition throughout this work. 
The weak form of the problem is to find $u(\cdot,t) \in V$ for $0 < t \leq T$ such that
\begin{equation}
\label{eq:variational_form}
(\partial_{tt}u,v)+a(u,v)=(f,v), \quad \forall v \in V,
\end{equation}
where $V=H_0^1(\Omega)$, $a(u,v)=\int_{\Omega} \kappa \nabla u \cdot \nabla v$, and $(\cdot,\cdot)$ denotes the $L^2$ inner product.

Next, we briefly review the multicontinuum homogenization method \cite{efendiev2023multicontinuum,chung2024multicontinuum} and derive the space-homogenized equation for the wave equation. The multicontinuum splitting schemes presented in the subsequent section will be designed according to it.

We start by dividing the spatial domain $\Omega$ into regular coarse-grid elements, $K$, which are not fine enough to resolve the heterogeneities. Within each coarse element $K$, we assume there are $N$ continua, each of which can be described by a function $\psi_i$. The homogenized solution $U_i$ is defined as 
\begin{equation}
U_i(x_K^*)=\dfrac{\int_K u \psi_i}{\int_K \psi_i},
\end{equation}
for some $x_K^*\in K$.
In high-contrast cases, each continuum can typically be identified by the coefficient value as a region within the coarse element, and $\psi_i$ is often chosen as the characteristic function of continuum $i$, given by
\begin{equation}
\psi_i = \begin{cases}
  1, & \text{in continuum} ~i,\\
  0, & \text{otherwise}.
\end{cases}
\end{equation} 
Then $U_i$ represents the local averages of the solution $u$ in continuum $i$. In more general scenarios, $\psi_i$ can be derived by solving local spectral problems to identify its corresponding continuum. We note that the continua are assumed to be given in the rest of this work, as their identification is not the main focus of our study. 
We refer to $U_i$'s as macroscopic variables, and impose an assumption that $U_i$'s are smooth across all coarse elements $K$'s. This smoothness allows us to introduce a two-term multicontinuum expansion \cite{efendiev2023multicontinuum, leung2024some} for the solution $u$ in each coarse element $K$
\begin{equation}
\label{eq:mc_expansion}
u \approx \phi_i U_i + \phi_i^m \nabla_m U_i,
\end{equation}
where summation over repeated indices is implied.

To obtain the multiscale basis functions $\phi_i$ and $\phi_i^m$, we formulate two cell problems, which minimize the local energy in an oversampled region under certain constraints. 
The first cell problem imposes constraints to represent the constants in the average behavior of each continuum
\begin{equation}
\label{eq:mc_cell_problem_1}
\begin{split}
&\int_{K^+}\kappa \nabla \phi_{i} \cdot \nabla v -
\sum_{j,p} \dfrac{\mu_{ij}^p}{\int_{K^p}\psi_j^p} \int_{K^p}\psi_j^p  v = 0,\\
&\int_{K^p} \phi_{i} \psi_j^p = \delta_{ij} \int_{K^p} \psi_j^p.
\end{split}
\end{equation}
The second cell problem imposes constraints to represent the linear functions in the average behavior of each continuum
\begin{equation}
\label{eq:mc_cell_problem_2}
\begin{split}
&\int_{K^+}\kappa \nabla \phi^{m}_{i}\cdot \nabla v -  \sum_{j,p} \dfrac{\mu_{ij}^{mp}}{\int_{K^p}\psi_j^p} \int_{K^p}\psi_j^p v =0,\\
&\int_{K^p}  \phi^{m}_{i} \psi_j^p = \delta_{ij} \int_{K^p} (x_m-\tilde{x}_{m})\psi_j^p ,\\
&\int_{K^{p_0}} (x_m-\tilde{x}_{m})\psi_j^{p_0} =0, \\
\end{split}
\end{equation}
where $\tilde{x}_m$ is a constant.
Here, $\mu$'s are Lagrange multipliers, and $K^+$ is an oversampled region consisting of several coarse blocks, denoted by $K^p$, with the center coarse block being $K=K^{p_0}$. The introduction of the oversampled region can mitigate boundary effects \cite{hou1997multiscale,chung2018constraint}.

Then we homogenize the wave equation in space by applying the multicontinuum expansion for the solution $u$ and the test function $v$
\begin{equation}
u \approx \phi_i U_i + \phi_i^m \nabla_m U_i, \quad v \approx \phi_j V_j + \phi_j^n \nabla_n V_j,
\end{equation}
and estimating each term in the weak form.
For example, the local integral of the time term can be approximated as
\begin{equation}
\begin{split}
\int_{K} (\partial_{tt} u) v 
& \approx \int_{K} (\phi_i \partial_{tt} U_i + \phi_i^m \partial_{tt} \nabla_m U_i)(\phi_j V_j + \phi_j^n \nabla_n V_j) \\
& \approx \int_{K} (\phi_i \partial_{tt} U_i)(\phi_j V_j) \\
& \approx \left( \int_{K} \phi_i \phi_j \right) (\partial_{tt}U_i) V_j.
\end{split}
\end{equation}
Here, we use the smoothness of $U_i$ and $V_j$, along with the fact that $\|\phi_i\|=O(1)$ and $\|\phi_i^{m}\|=O(H)$, where $H$ is the size of coarse block $K$. 
Estimating the other terms similarly and introducing the effective properties
\begin{equation}
\begin{split}
& \gamma_{ij} = \dfrac{1}{|K|} \int_{K} \phi_j \phi_i, \quad
\alpha_{ij} = \dfrac{1}{|K|} \int_{K} \kappa \nabla \phi_j \cdot \nabla \phi_i, \quad
\alpha_{ij}^{mn} = \dfrac{1}{|K|} \int_{K} \kappa \nabla \phi_j^m \cdot \nabla \phi_i^n, \\
& \alpha_{ij}^{m*} = \dfrac{1}{|K|} \int_{K} \kappa \nabla \phi_j^m \cdot \nabla \phi_i, \quad
\alpha_{ij}^{*m} = \dfrac{1}{|K|} \int_{K} \kappa \nabla \phi_j \cdot \nabla \phi_i^m, \quad
f_j = \dfrac{1}{|K|} \int_{K} f \phi_j,
\end{split}
\end{equation}
we can write the weak form as 
\begin{equation}
\label{eq:mc_derivation_step}
\begin{split}
& \int_{\Omega} (\partial_{tt} u) v + \int_{\Omega} \kappa \nabla u \cdot \nabla v \\
\approx & \int_{\Omega} \gamma_{ji} (\partial_{tt} U_i) V_j + \int_{\Omega} \alpha_{ji} U_i V_j + \int_{\Omega} \alpha_{ji}^{*n} U_i \nabla_n V_j + \int_{\Omega} \alpha_{ji}^{m*} \nabla_m U_i V_j + \int_{\Omega} \alpha_{ji}^{mn} \nabla_m U_i \nabla_n V_j \\
\approx & \int_{\Omega} \gamma_{ji} (\partial_{tt} U_i) V_j + \int_{\Omega} \alpha_{ji} U_i V_j + \int_{\Omega} \alpha_{ji}^{mn} \nabla_m U_i \nabla_n V_j \\
\end{split}
\end{equation}
and
\begin{equation}
\begin{split}
\int_{\Omega} f v 
\approx \int_{\Omega} f_j V_j.
\end{split}
\end{equation}
Finally, the homogenized wave equations can be obtained as
\begin{equation}
\gamma_{ji} \partial_{tt} U_i + \alpha_{ji} U_i - \nabla_n (\alpha_{ji}^{mn} \nabla_m U_i) = f_j,
\end{equation}
for $j=1,2,\ldots, N$. For numerical computation, we retain the weak form of these macroscopic equations and solve them on the coarse grid for efficiency.

\section{Multicontinuum splitting schemes}
\label{sec:schemes}

In this section, we discuss temporal splitting schemes for the wave equation.
Building on the multicontinuum homogenization framework introduced in Section \ref{sec:preliminaries}, we design partially explicit time discretization schemes, and derive their stability conditions. Throughout this work, 

We begin by summarizing the semi-discretization in space for the wave equation. We select a finite-dimensional space $V_H \subset V$ constructed upon the coarse mesh $\mathcal{T}_H$, and introduce the discrete multicontinuum space
\begin{equation}
\begin{split}
V_{mc,H}=\left\{ \sum_{x_{l} \in I_H} \mathds{1}_{K_{H}(x_l)} \sum_{i=1}^{N}  \left( \phi_{i}^{K_{H}(x_l)} U_{i} + \tilde{\phi}_{i}^{K_{H}(x_l)}\cdot\nabla U_{i} \right): \; U_i \in V_{H} \right\}.
\end{split}
\end{equation}
Here, $I_{H}$ is the set of centers of all coarse block in $\mathcal{T}_H$, $K_{H}(x_l)$ represents the coarse block with center $x_l$, and $\mathds{1}_{K_{H}(x_l)}$ denotes its corresponding characteristic function. The multiscale basis functions $\phi_{i}^{K_{H}(x_l)}$ and $\tilde{\phi}_{i}^{K_{H}(x_l)} = (\phi_{i}^{K_{H}(x_l),m})_m$ are obtained by solving the cell problems (\ref{eq:mc_cell_problem_1}) and (\ref{eq:mc_cell_problem_2}), with $K$ replaced by $K_{H}(x_l)$. The number of continua is denoted by $N$.
Then for the spatial discretization, we aim to find $u(\cdot,t) \in V_{mc,H}$ such that 
\begin{equation}
\label{eq:mc_variational}
(\partial_{tt}u,v)+a(u,v)=(f,v), \quad \forall v \in V_{mc,H},
\end{equation}
with the initial conditions projected into $V_{mc,H}$.

Next, we consider decomposing the multicontinuum space $V_{mc,H}$ into the direct sum of two subspaces. Suppose $I_1$ and $I_2$ are two disjoint subsets of $\{1,2,\ldots,N\}$ satisfying $I_1 \sqcup I_2=\{1,2,\ldots,N\}$, and define
\begin{equation}
\label{eq:first_decomposition}
\begin{split}
V_{mc,j,H} = \left\{ \sum_{x_{l} \in I_H} \mathds{1}_{K_{H}(x_l)} \sum_{i\in I_{j}} \left( \phi_{i}^{K_{H}(x_l)} U_{i} + \tilde{\phi}_{i}^{K_{H}(x_l)}\cdot\nabla U_{i} \right): \; U_i \in V_{H} \right\}.
\end{split}
\end{equation}
It follows that $V_{mc,H}=V_{mc,1,H} \oplus V_{mc,2,H}$. For convenience, we introduce the downscaling operator $T_{j}: \; \prod_{i\in I_j} V_{H} \rightarrow V_{mc,j,H}$ and its constant and linear components $T_{j,0},T_{j,1}$ by
\begin{equation}
\begin{split}
    T_{j,0}((U_i)_{i\in I_{j}}) & = \sum_{x_{l} \in I_H} \mathds{1}_{K_{H}(x_l)} \sum_{i\in I_{j}} \phi_{i}^{K_{H}(x_l)} U_{i}, \\
    T_{j,1}((U_i)_{i\in I_{j}}) & = \sum_{x_{l} \in I_H} \mathds{1}_{K_{H}(x_l)} \sum_{i\in I_{j}} \tilde{\phi}_{i}^{K_{H}(x_l)}\cdot\nabla U_{i},
\end{split}
\end{equation}
and $T_{j}=T_{j,0}+T_{j,1}$, where $j=1,2$. We also define the bilinear forms $m_{ij}(\cdot,\cdot)$, $a_{ij}(\cdot,\cdot)$, $b_{ij}(\cdot,\cdot)$ and $c_{ij}(\cdot,\cdot)$ by
\begin{equation}
\begin{aligned}
m_{ij}(U,W) &= (T_{j}U,T_{i}W),  & a_{ij}(U,W)=a(T_{j,1}U,T_{i,1}W), \\
b_{ij}(U,W) &= a(T_{j,1}U,T_{i,0}W), & c_{ij}(U,W)=a(T_{j,0}U,T_{i,0}W),
\end{aligned}
\end{equation}
for any $U \in \prod_{k\in I_j} V_{H}$ and $W \in \prod_{k\in I_i} V_{H}$.

Note that the semi-discretization (\ref{eq:mc_variational}) is equivalent to considering $u=u_{1}+u_{2}$, with $u_{1} \in V_{mc,1,H}$ and $u_{2} \in V_{mc,2,H}$ such that
\begin{equation}
\label{eq:mc_variational_split_1}
\begin{split}
(\partial_{tt}u_{1},v_{1})+(\partial_{tt}u_{2},v_{1})+a(u_{1},v_{1})+a(u_{2},v_{1}) & =(f,v_{1}), \quad \forall v_1\in V_{mc,1,H},\\
(\partial_{tt}u_{1},v_{2})+(\partial_{tt}u_{2},v_{2})+a(u_{1},v_{2})+a(u_{2},v_{2}) & =(f,v_{2}), \quad \forall v_2\in V_{mc,2,H}.
\end{split}
\end{equation}
It can be further rewritten as seeking $U_1 \in V_{1,H}:= \prod_{k\in I_1} V_{H}$ and $U_2 \in V_{2,H}:= \prod_{k\in I_2} V_{H}$ satisfying
\begin{equation}
\label{eq:eq:mc_variational_split_2}
\begin{split}
\sum_{j=1}^{2}m_{ij}(\partial_{tt}U_{j},W_{i})+\sum_{j=1}^{2}a_{ij}(U_{j},W_{i})+\sum_{j=1}^{2}c_{ij}(U_{j},W_{i}) & =(f_{i},W_{i}),\quad\forall W_{i} \in V_{i,H},
\end{split}
\end{equation}
for $i=1,2$. Here, we write $(f,T_{i}W_{i})=(f_{i},W_{i})$, and use $b_{ij}(U_{j},W_{i})+b_{ji}(W_{i},U_{j})=0$, which can be verified through integration by parts \cite{efendiev2023multicontinuum}.

When discretizing in time, our objective is to design a numerical scheme whose stability condition is independent of one of the two subspaces $V_{mc,1,H}$ and $V_{mc,2,H}$. Consequently, the stability of the scheme will depend solely on the slow dynamics, if all the fast dynamics has been collected in one subspace.
Now we assume that $V_{mc,1,H}$ includes all the fast modes. To achieve our goal, we treat $U_1$ and $U_2$ in an implicit and explicit fashion, respectively, in (\ref{eq:eq:mc_variational_split_2}), and formulate the following three-layer partially explicit time discretization schemes:

\noindent\textbf{Discretization scheme 1:} Find $U_{1}^{n+1}\in V_{1,H}$ and $U_{2}^{n+1}\in V_{2,H}$ such that
\begin{equation}
\label{eq:Scheme_1}
\begin{split}
&\dfrac{1}{\tau^2} m_{11}(U_{1}^{n+1}-2U_{1}^{n}+U_{1}^{n-1},W_{1}) + \dfrac{1}{\tau^2} m_{12}(U_{2}^{n+1}-2U_{2}^{n}+U_{2}^{n-1},W_{1}) \\ 
+&\dfrac{1}{2}a_{11}(U_{1}^{n+1}+U_{1}^{n-1},W_{1})+a_{12}(U_{2}^{n},W_{1})+\dfrac{1}{2}c_{11}(U_{1}^{n+1}+U_{1}^{n-1},W_{1})+\dfrac{1}{2}c_{12}(U_{2}^{n+1}+U_{2}^{n-1},W_{1}) = (f_{1},W_{1}),\quad \forall W_{1}\in V_{1,H},\\
&\dfrac{1}{\tau^2} m_{22}(U_{2}^{n+1}-2U_{2}^{n}+U_{2}^{n-1},W_{2}) + \dfrac{1}{\tau^2} m_{21}(U_{1}^{n+1}-2U_{1}^{n}+U_{1}^{n-1},W_{2}) \\
+&a_{21}(U_{1}^{n},W_{2})+a_{22}(U_{2}^{n},W_{2})+\dfrac{1}{2}c_{21}(U_{1}^{n+1}+U_{1}^{n-1},W_{2})+\dfrac{1}{2}c_{22}(U_{2}^{n+1}+U_{2}^{n-1},W_{2}) = (f_{2},W_{2}),\quad \forall W_{2}\in V_{2,H}.
\end{split}
\end{equation}

\noindent\textbf{Discretization scheme 2:} Find $U_{1}^{n+1}\in V_{1,H}$ and $U_{2}^{n+1}\in V_{2,H}$ such that
\begin{equation}
\label{eq:Scheme_2}
\begin{split}
&\dfrac{1}{\tau^2} m_{11}(U_{1}^{n+1}-2U_{1}^{n}+U_{1}^{n-1},W_{1}) + \dfrac{1}{\tau^2} m_{12}(U_{2}^{n+1}-2U_{2}^{n}+U_{2}^{n-1},W_{1}) \\ 
+&\dfrac{1}{2}a_{11}(U_{1}^{n+1}+U_{1}^{n-1},W_{1})+a_{12}(U_{2}^{n},W_{1})+\dfrac{1}{2}c_{11}(U_{1}^{n+1}+U_{1}^{n-1},W_{1})+c_{12}(U_{2}^{n},W_{1}) = (f_{1},W_{1}),\quad \forall W_{1}\in V_{1,H},\\
&\dfrac{1}{\tau^2} m_{22}(U_{2}^{n+1}-2U_{2}^{n}+U_{2}^{n-1},W_{2}) + \dfrac{1}{\tau^2} m_{21}(U_{1}^{n+1}-2U_{1}^{n}+U_{1}^{n-1},W_{2}) \\
+&a_{21}(U_{1}^{n},W_{2})+a_{22}(U_{2}^{n},W_{2})+c_{21}(U_{1}^{n},W_{2})+c_{22}(U_{2}^{n},W_{2}) = (f_{2},W_{2}),\quad \forall W_{2}\in V_{2,H}.
\end{split}
\end{equation}
These schemes can be rewritten in matrix form, following the steps outlined in Section \ref{sec:preliminaries}, which makes them straightforward for numerical implementation.

The difference between the two schemes lies in how the $c_{ij}$ terms are handled. Scheme 1 processes all $c_{ij}$ terms implicitly, making it more stable than scheme 2, particularly when the reaction term in the partial differential equation, which corresponds to $c_{ij}$ terms, is large.
Also, it can be noted that the $m_{12}$ and $m_{21}$ terms vanish when $V_{mc,1,H}$ and $V_{mc,2,H}$ are $L_2$-orthogonal. In this case, the second equation in discretization scheme 2 becomes independent of $U_{1}^{n+1}$, and therefore scheme 2 is totally decoupled. Although scheme 1 remains coupled through the mass matrix, this coupling can be eliminated employing techniques such as mass lumping.

We now examine the stability of the above two schemes. Since energy conservation is critical for the wave equation, we start by showing the discrete energy conservation. We assume $f_1=f_2=0$ to facilitate the discussion.
To denote the discrete energy, we introduce the following norms 
\begin{equation}
\begin{split}
& \|(W_{1},W_{2})\|_{m}^{2}:=\sum_{i,j=1}^{2} m_{ij} (W_{j},W_{i})=(\sum_{i=1}^{2} T_{i}W_{i}, \sum_{i=1}^{2} T_{i}W_{i}),\\
& \|(W_{1},W_{2})\|_{a}^{2}:=\sum_{i,j=1}^{2} a_{ij} (W_{j},W_{i})=a(\sum_{i=1}^{2} T_{i,1}W_{i}, \sum_{i=1}^{2} T_{i,1}W_{i}),\\
& \|(W_{1},W_{2})\|_{c}^{2}:=\sum_{i,j=1}^{2} c_{ij} (W_{j},W_{i})=a(\sum_{i=1}^{2} T_{i,0}W_{i}, \sum_{i=1}^{2} T_{i,0}W_{i}),
\end{split}
\end{equation}
for any $W_{1} \in V_{1,H}$ and $W_{2} \in V_{2,H}$.

\begin{thm}
\label{thm:scheme_1_conservation}
For the discretization scheme 1 in (\ref{eq:Scheme_1}), we have the discrete energy conservation 
\begin{equation}
E^{n+\frac{1}{2}} = E^{n-\frac{1}{2}},
\end{equation}
where
\begin{equation}
\begin{split}
E^{n+\frac{1}{2}} 
= &
\dfrac{2}{\tau^2} \|(U_{1}^{n+1}-U_{1}^{n},U_{2}^{n+1}-U_{2}^{n})\|_m^2 
+ \|(U_{1}^{n+1},U_{2}^{n+1})\|_c^2 + \|(U_{1}^{n},U_{2}^{n})\|_c^2 \\
& + \|(U_{1}^{n+1},U_{2}^{n})\|_a^2 + \|(U_{1}^{n},U_{2}^{n+1})\|_a^2
- \|U_{2}^{n+1}-U_{2}^{n}\|_{a_{22}}^2.
\end{split}
\end{equation}
\end{thm}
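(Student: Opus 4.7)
The plan is to mimic the classical leapfrog energy argument for second-order schemes, adapted to the partially explicit splitting. Specifically, I would test the first equation of (\ref{eq:Scheme_1}) with $W_1 = U_1^{n+1} - U_1^{n-1}$ and the second with $W_2 = U_2^{n+1} - U_2^{n-1}$, sum the two resulting identities, and show that the aggregate left-hand side equals $\tfrac{1}{2}(E^{n+1/2} - E^{n-1/2})$. Since the right-hand side vanishes by the assumption $f_1 = f_2 = 0$, conservation follows.

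For the mass block, I would exploit the symmetry $m_{ij}(U,W) = m_{ji}(W,U)$ inherited from $m_{ij}(U,W) = (T_j U, T_i W)$ to combine the four tested terms into a single $L^2$ pairing $\bigl(T_1 \delta^2 U_1 + T_2 \delta^2 U_2,\; T_1 \delta U_1 + T_2 \delta U_2\bigr)/\tau^2$, where $\delta^2 U_j = U_j^{n+1} - 2 U_j^n + U_j^{n-1}$ and $\delta U_j = U_j^{n+1} - U_j^{n-1}$. The elementary identity $(a-2b+c,\,a-c) = \|a-b\|^2 - \|b-c\|^2$ then converts this into the telescoping quantity $\tau^{-2}\bigl[\|(U_1^{n+1}-U_1^n,\,U_2^{n+1}-U_2^n)\|_m^2 - \|(U_1^n-U_1^{n-1},\,U_2^n-U_2^{n-1})\|_m^2\bigr]$, which matches the $2/\tau^2$ coefficient in $E^{n+1/2}$ up to the global factor of $2$ relating the tested identity and the claimed energy difference. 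For the fully implicit $c_{ij}$ block, the trick $(a+c,\,a-c) = a^2 - c^2$ together with the symmetry of the form $\sum_{ij} c_{ij}$ collapses the four tested $c$-terms into $\tfrac{1}{2}\bigl(\|(U_1^{n+1},U_2^{n+1})\|_c^2 - \|(U_1^{n-1},U_2^{n-1})\|_c^2\bigr)$, again exactly half of the corresponding contribution to $E^{n+1/2} - E^{n-1/2}$.

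The main obstacle is the $a_{ij}$ block, where the splitting is not uniformly implicit: $a_{11}$ is centered via $\tfrac12(U_1^{n+1}+U_1^{n-1})$, while $a_{12}$, $a_{21}$, and $a_{22}$ are evaluated at level $n$. The $a_{11}$ piece telescopes cleanly by $(a+c)(a-c)=a^2-c^2$, but the explicit cross and diagonal terms must be matched against the crossed energy contributions $\|(U_1^{n+1},U_2^n)\|_a^2 + \|(U_1^n,U_2^{n+1})\|_a^2$ and the correction $-\|U_2^{n+1}-U_2^n\|_{a_{22}}^2$. I would expand each crossed norm in components, apply the identity $a_{21}(U,W) = a_{12}(W,U)$ (from symmetry of $a$), and verify by direct calculation that the $a_{22}$-parts of the crossed norms minus the correction collapse (after cancellation of the diagonal $a_{22}(U_2^{n+1},U_2^{n+1})$ and $a_{22}(U_2^{n-1},U_2^{n-1})$ contributions) to $2\,a_{22}(U_2^{n+1}-U_2^{n-1},\,U_2^n)$, which is precisely twice the tested $a_{22}(U_2^n, W_2)$ term. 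Analogously, the cross $a_{12},a_{21}$ parts of the energy difference reduce to $2[a_{12}(U_2^n, U_1^{n+1}-U_1^{n-1}) + a_{12}(U_2^{n+1}-U_2^{n-1}, U_1^n)]$, matching the tested cross contributions exactly.

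The role of the correction $-\|U_2^{n+1}-U_2^n\|_{a_{22}}^2$ is the conceptually nontrivial part of the statement: without it, the explicit treatment of $a_{22}$ would prevent a clean telescope, since $a_{22}(U_2^n,U_2^n)$ contributes to both $E^{n+1/2}$ (via $\|(U_1^{n+1},U_2^n)\|_a^2$) and $E^{n-1/2}$ (via $\|(U_1^{n-1},U_2^n)\|_a^2$) simultaneously. The correction term is tuned precisely to cancel the mismatched diagonal $a_{22}$ residues while leaving the desired $2\,a_{22}(U_2^{n+1}-U_2^{n-1},U_2^n)$ kernel. Once the three blocks are aligned, summing the two tested equations yields $\tfrac12(E^{n+1/2} - E^{n-1/2}) = 0$, completing the proof.
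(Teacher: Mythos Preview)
Your proposal is correct and follows essentially the same route as the paper: test with $W_j=U_j^{n+1}-U_j^{n-1}$, telescope the $m_{ij}$- and $c_{ij}$-blocks via $(a-2b+c,a-c)=\|a-b\|^2-\|b-c\|^2$ and $(a+c,a-c)=\|a\|^2-\|c\|^2$, and handle the mixed $a_{ij}$-block by the polarization identity for $a_{22}$ together with the symmetry $a_{21}(U,W)=a_{12}(W,U)$. The only cosmetic difference is that the paper builds the $a$-block forward (rewriting $a_{22}(U_2^n,U_2^{n+1})$ via polarization) whereas you verify the match backward from the claimed energy; both amount to the same computation.
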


\begin{proof}

We choose $W_1=U_{1}^{n+1}-U_{1}^{n-1}$ and $W_2=U_{2}^{n+1}-U_{2}^{n-1}$ in discretization scheme 1 in (\ref{eq:Scheme_1}), and we have
\begin{equation}
\label{eq:Thm_1_main_eqs}
\begin{split}
&\dfrac{1}{\tau^2} m_{11}(U_{1}^{n+1}-2U_{1}^{n}+U_{1}^{n-1},U_{1}^{n+1}-U_{1}^{n-1}) + \dfrac{1}{\tau^2} m_{12}(U_{2}^{n+1}-2U_{2}^{n}+U_{2}^{n-1},U_{1}^{n+1}-U_{1}^{n-1}) \\ 
+&\dfrac{1}{2}a_{11}(U_{1}^{n+1}+U_{1}^{n-1},U_{1}^{n+1}-U_{1}^{n-1})+a_{12}(U_{2}^{n},U_{1}^{n+1}-U_{1}^{n-1}) \\
+&\dfrac{1}{2}c_{11}(U_{1}^{n+1}+U_{1}^{n-1},U_{1}^{n+1}-U_{1}^{n-1})+\dfrac{1}{2}c_{12}(U_{2}^{n+1}+U_{2}^{n-1},U_{1}^{n+1}-U_{1}^{n-1}) = 0,\\
&\dfrac{1}{\tau^2} m_{22}(U_{2}^{n+1}-2U_{2}^{n}+U_{2}^{n-1},U_{2}^{n+1}-U_{2}^{n-1}) + \dfrac{1}{\tau^2} m_{21}(U_{1}^{n+1}-2U_{1}^{n}+U_{1}^{n-1},U_{2}^{n+1}-U_{2}^{n-1}) \\
+&a_{21}(U_{1}^{n},U_{2}^{n+1}-U_{2}^{n-1})+a_{22}(U_{2}^{n},U_{2}^{n+1}-U_{2}^{n-1})\\
+&\dfrac{1}{2}c_{21}(U_{1}^{n+1}+U_{1}^{n-1},U_{2}^{n+1}-U_{2}^{n-1})+\dfrac{1}{2}c_{22}(U_{2}^{n+1}+U_{2}^{n-1},U_{2}^{n+1}-U_{2}^{n-1}) = 0.
\end{split}
\end{equation}

First, we rewrite the terms involving the bilinear forms $m_{ij}$ as
\begin{equation}
\begin{split}
& m_{11}(U_{1}^{n+1}-2U_{1}^{n}+U_{1}^{n-1},U_{1}^{n+1}-U_{1}^{n-1}) \\
= & m_{11}((U_{1}^{n+1}-U_{1}^{n})-(U_{1}^{n}-U_{1}^{n-1}),(U_{1}^{n+1}-U_{1}^{n})+(U_{1}^{n}-U_{1}^{n-1})) \\
= & m_{11}(U_{1}^{n+1}-U_{1}^{n},U_{1}^{n+1}-U_{1}^{n}) - m_{11}(U_{1}^{n}-U_{1}^{n-1},U_{1}^{n}-U_{1}^{n-1}),
\end{split}
\end{equation}
\begin{equation}
\begin{split}
& m_{22}(U_{2}^{n+1}-2U_{2}^{n}+U_{2}^{n-1},U_{2}^{n+1}-U_{2}^{n-1}) \\
= & m_{22}(U_{2}^{n+1}-U_{2}^{n},U_{2}^{n+1}-U_{2}^{n}) - m_{22}(U_{2}^{n}-U_{2}^{n-1},U_{2}^{n}-U_{2}^{n-1}),
\end{split}
\end{equation}
and
\begin{equation}
\begin{split}
& m_{12}(U_{2}^{n+1}-2U_{2}^{n}+U_{2}^{n-1},U_{1}^{n+1}-U_{1}^{n-1}) + m_{21}(U_{1}^{n+1}-2U_{1}^{n}+U_{1}^{n-1},U_{2}^{n+1}-U_{2}^{n-1}) \\
= & m_{12}(U_{2}^{n+1}-U_{2}^{n},U_{1}^{n+1}-U_{1}^{n}) - m_{12}(U_{2}^{n}-U_{2}^{n-1},U_{1}^{n}-U_{1}^{n-1}) + \\
& m_{21}(U_{1}^{n+1}-U_{1}^{n},U_{2}^{n+1}-U_{2}^{n}) - m_{21}(U_{1}^{n}-U_{1}^{n-1},U_{2}^{n}-U_{2}^{n-1}).
\end{split}
\end{equation}
Adding them up gives us
\begin{equation}
\begin{split}
& m_{11}(U_{1}^{n+1}-2U_{1}^{n}+U_{1}^{n-1},U_{1}^{n+1}-U_{1}^{n-1}) + m_{22}(U_{2}^{n+1}-2U_{2}^{n}+U_{2}^{n-1},U_{2}^{n+1}-U_{2}^{n-1}) + \\
& m_{12}(U_{2}^{n+1}-2U_{2}^{n}+U_{2}^{n-1},U_{1}^{n+1}-U_{1}^{n-1}) + m_{21}(U_{1}^{n+1}-2U_{1}^{n}+U_{1}^{n-1},U_{2}^{n+1}-U_{2}^{n-1}) \\
= & \sum_{i,j=1}^{2} m_{ij}(U_{j}^{n+1}-U_{j}^{n},U_{i}^{n+1}-U_{i}^{n}) - \sum_{i,j=1}^{2} m_{ij}(U_{j}^{n}-U_{j}^{n-1},U_{i}^{n}-U_{i}^{n-1}) \\
= & \|(U_{1}^{n+1}-U_{1}^{n},U_{2}^{n+1}-U_{2}^{n})\|_{m}^2 - \|(U_{1}^{n}-U_{1}^{n-1},U_{2}^{n}-U_{2}^{n-1})\|_{m}^2.
\end{split}
\end{equation}

Next, the summation of $c_{ij}$ terms can be rewritten as
\begin{equation}
\begin{split}
& c_{11}(U_{1}^{n+1}+U_{1}^{n-1},U_{1}^{n+1}-U_{1}^{n-1}) + c_{22}(U_{2}^{n+1}+U_{2}^{n-1},U_{2}^{n+1}-U_{2}^{n-1}) + \\
& c_{12}(U_{2}^{n+1}+U_{2}^{n-1},U_{1}^{n+1}-U_{1}^{n-1}) + c_{21}(U_{1}^{n+1}+U_{1}^{n-1},U_{2}^{n+1}-U_{2}^{n-1}) \\
= & c_{11}(U_{1}^{n+1},U_{1}^{n+1}) - c_{11}(U_{1}^{n-1},U_{1}^{n-1}) + c_{22}(U_{2}^{n+1},U_{2}^{n+1}) - c_{22}(U_{2}^{n-1},U_{2}^{n-1}) + \\
& c_{12}(U_{2}^{n+1},U_{1}^{n+1}) - c_{12}(U_{2}^{n-1},U_{1}^{n-1}) + c_{21}(U_{1}^{n+1},U_{2}^{n+1}) - c_{21}(U_{1}^{n-1},U_{2}^{n-1}) \\
= & \|(U_{1}^{n+1},U_{2}^{n+1})\|_{c}^2 - \|(U_{1}^{n-1},U_{2}^{n-1})\|_{c}^2 \\
= & \left(\|(U_{1}^{n+1},U_{2}^{n+1})\|_{c}^2 + \|(U_{1}^{n},U_{2}^{n})\|_{c}^2\right) - \left(\|(U_{1}^{n},U_{2}^{n})\|_{c}^2 + \|(U_{1}^{n-1},U_{2}^{n-1})\|_{c}^2\right).
\end{split}
\end{equation}

Additionally, for the $a_{ij}$ terms, by using the symmetry of bilinear form $a(\cdot,\cdot)$, we obtain
\begin{equation}
\begin{split}
a_{11}(U_{1}^{n+1}+U_{1}^{n-1},U_{1}^{n+1}-U_{1}^{n-1})
= a_{11}(U_{1}^{n+1},U_{1}^{n+1})-a_{11}(U_{1}^{n-1},U_{1}^{n-1}),
\end{split}
\end{equation}
and
\begin{equation}
\begin{split}
& a_{12}(U_{2}^{n},U_{1}^{n+1}-U_{1}^{n-1}) + a_{21}(U_{1}^{n},U_{2}^{n+1}-U_{2}^{n-1}) \\
= & a_{12}(U_{2}^{n},U_{1}^{n+1}) - a_{12}(U_{2}^{n},U_{1}^{n-1}) + a_{21}(U_{1}^{n},U_{2}^{n+1}) - a_{21}(U_{1}^{n},U_{2}^{n-1}) \\
= & a_{12}(U_{2}^{n+1},U_{1}^{n}) + a_{21}(U_{1}^{n+1},U_{2}^{n}) - a_{12}(U_{2}^{n},U_{1}^{n-1}) - a_{21}(U_{1}^{n},U_{2}^{n-1}),
\end{split}
\end{equation}
and
\begin{equation}
\begin{split}
& a_{22}(U_{2}^{n},U_{2}^{n+1}-U_{2}^{n-1}) \\
= & a_{22}(U_{2}^{n+1},U_{2}^{n}) - a_{22}(U_{2}^{n},U_{2}^{n-1}) \\
= & \dfrac{1}{2} \left( a_{22}(U_{2}^{n+1},U_{2}^{n+1}) + a_{22}(U_{2}^{n},U_{2}^{n}) - a_{22}(U_{2}^{n+1}-U_{2}^{n},U_{2}^{n+1}-U_{2}^{n}) \right) - \\
& \dfrac{1}{2} \left( a_{22}(U_{2}^{n},U_{2}^{n}) + a_{22}(U_{2}^{n-1},U_{2}^{n-1}) - a_{22}(U_{2}^{n}-U_{2}^{n-1},U_{2}^{n}-U_{2}^{n-1}) \right).
\end{split}
\end{equation}
Therefore, the summation of the $a_{ij}$ terms in (\ref{eq:Thm_1_main_eqs}) is
\begin{equation}
\begin{split}
& \dfrac{1}{2} a_{11}(U_{1}^{n+1}+U_{1}^{n-1},U_{1}^{n+1}-U_{1}^{n-1}) + a_{12}(U_{2}^{n},U_{1}^{n+1}-U_{1}^{n-1}) + a_{21}(U_{1}^{n},U_{2}^{n+1}-U_{2}^{n-1}) + a_{22}(U_{2}^{n},U_{2}^{n+1}-U_{2}^{n-1}) \\
= & \bigg( \dfrac{1}{2} \sum_{i=1,2} \left( \|U_i^{n+1}\|_{a_{ii}}^2 + \|U_i^{n}\|_{a_{ii}}^2 \right) + a_{12}(U_{2}^{n+1},U_{1}^{n}) + a_{21}(U_{1}^{n+1},U_{2}^{n}) - \dfrac{1}{2} \|U_{2}^{n+1}-U_{2}^{n}\|_{a_{22}}^2 \bigg) - \\
& \bigg( \dfrac{1}{2} \sum_{i=1,2} \left( \|U_i^{n}\|_{a_{ii}}^2 + \|U_i^{n-1}\|_{a_{ii}}^2 \right) + a_{12}(U_{2}^{n},U_{1}^{n-1}) + a_{21}(U_{1}^{n},U_{2}^{n-1}) - \dfrac{1}{2} \|U_{2}^{n}-U_{2}^{n-1}\|_{a_{22}}^2 \bigg) \\
= & \dfrac{1}{2} \left( \|(U_{1}^{n+1},U_{2}^{n})\|_{a}^2 + \|(U_{1}^{n},U_{2}^{n+1})\|_{a}^2  - \|U_{2}^{n+1}-U_{2}^{n}\|_{a_{22}}^2 \right) - 
\\ &\dfrac{1}{2} \left( \|(U_{1}^{n},U_{2}^{n-1})\|_{a}^2 + \|(U_{1}^{n-1},U_{2}^{n})\|_{a}^2  - \|U_{2}^{n}-U_{2}^{n-1}\|_{a_{22}}^2 \right).  \\
\end{split}
\end{equation}

Finally, adding the two equations in (\ref{eq:Thm_1_main_eqs}) and combining all the above results, we arrive at
\begin{equation}
\begin{split}
E^{n+\frac{1}{2}} = E^{n-\frac{1}{2}}.
\end{split}
\end{equation}

\end{proof}

According to the strengthened Cauchy-Schwarz inequality \cite{aldaz2009strengthened}, we define a constant $\gamma \in (0,1)$ by
\begin{equation}
\gamma = \sup_{W_1 \in V_{1,H}, W_2 \in V_{2,H}} \dfrac{(T_1 W_1, T_2 W_2)}{\|T_1 W_1\| \|T_2 W_2\|}.
\end{equation} 
Then for any $W_1 \in V_{1,H}$ and $W_2 \in V_{2,H}$, we have
\begin{equation}
\begin{split}
m_{12}(W_2,W_1) = m_{21}(W_1,W_2) =  (T_1 W_1, T_2 W_2) \leq \gamma \|T_1 W_1\| \|T_2 W_2\| = \gamma \|W_1\|_{m_{11}} \|W_2\|_{m_{22}}.
\end{split}
\end{equation}
Using the conservation law in Theorem \ref{thm:scheme_1_conservation}, we obtain the following stability result.

\begin{thm}
The discretization scheme 1 in (\ref{eq:Scheme_1}) is stable if the stability condition
\begin{equation}
\tau^2 \leq 2(1-\gamma^2) \inf_{W\in V_{2,H}}\cfrac{\|W\|_{m_{22}}^{2}}{\|W\|_{a_{22}}^{2}}
\end{equation}
is satisfied.
\end{thm}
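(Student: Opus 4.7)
The plan is to leverage the discrete energy identity $E^{n+\frac{1}{2}}=E^{\frac{1}{2}}$ from Theorem \ref{thm:scheme_1_conservation}: if I can show that, under the stated CFL restriction, $E^{n+\frac{1}{2}}$ is a nonnegative functional that majorizes meaningful norms of $(U_{1}^{n},U_{2}^{n})$ and $(U_{1}^{n+1},U_{2}^{n+1})$, then conservation immediately bounds the solution in terms of initial data uniformly in $n$, which is the desired stability.

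The only term in $E^{n+\frac{1}{2}}$ with a bad sign is $-\|U_{2}^{n+1}-U_{2}^{n}\|_{a_{22}}^{2}$; the $c$- and cross $a$- contributions are all manifestly nonnegative since they are squared seminorms of pairs. My goal is therefore to absorb this term into the positive mass contribution $\tfrac{2}{\tau^{2}}\|(U_{1}^{n+1}-U_{1}^{n},U_{2}^{n+1}-U_{2}^{n})\|_{m}^{2}$. To this end, for arbitrary $W_{1}\in V_{1,H}$ and $W_{2}\in V_{2,H}$, I expand
\begin{equation*}
\|(W_{1},W_{2})\|_{m}^{2}=\|T_{1}W_{1}\|^{2}+2(T_{1}W_{1},T_{2}W_{2})+\|T_{2}W_{2}\|^{2},
\end{equation*}
apply the strengthened Cauchy--Schwarz bound $|(T_{1}W_{1},T_{2}W_{2})|\le\gamma\|T_{1}W_{1}\|\|T_{2}W_{2}\|$, and complete the square via the identity $a^{2}-2\gamma ab+b^{2}=(a-\gamma b)^{2}+(1-\gamma^{2})b^{2}$ to obtain the key lower bound $\|(W_{1},W_{2})\|_{m}^{2}\ge(1-\gamma^{2})\|W_{2}\|_{m_{22}}^{2}$.

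Specializing to $W_{2}=U_{2}^{n+1}-U_{2}^{n}$ and invoking the CFL hypothesis in the form $\tau^{2}\|W\|_{a_{22}}^{2}\le 2(1-\gamma^{2})\|W\|_{m_{22}}^{2}$ yields $\tfrac{2}{\tau^{2}}\|(U_{1}^{n+1}-U_{1}^{n},U_{2}^{n+1}-U_{2}^{n})\|_{m}^{2}\ge\|U_{2}^{n+1}-U_{2}^{n}\|_{a_{22}}^{2}$, so the negative term is dominated. Hence $E^{n+\frac{1}{2}}\ge\|(U_{1}^{n+1},U_{2}^{n+1})\|_{c}^{2}+\|(U_{1}^{n},U_{2}^{n})\|_{c}^{2}+\|(U_{1}^{n+1},U_{2}^{n})\|_{a}^{2}+\|(U_{1}^{n},U_{2}^{n+1})\|_{a}^{2}\ge 0$, and the conservation identity transports this bound to every time level. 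The main subtlety is arranging the square-completion so that the constant $(1-\gamma^{2})$ emerging from the strengthened Cauchy--Schwarz step exactly matches the one appearing in the CFL factor; everything else reduces to the routine nonnegativity check of the remaining terms.
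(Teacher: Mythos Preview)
Your proposal is correct and follows essentially the same approach as the paper: both use the conservation identity from Theorem~\ref{thm:scheme_1_conservation}, apply the strengthened Cauchy--Schwarz inequality with square completion to obtain $\|(W_{1},W_{2})\|_{m}^{2}\ge(1-\gamma^{2})\|W_{2}\|_{m_{22}}^{2}$, and then invoke the CFL hypothesis to absorb the sole negative term $-\|U_{2}^{n+1}-U_{2}^{n}\|_{a_{22}}^{2}$ into the mass contribution, yielding $E^{n+1/2}\ge 0$.
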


\begin{proof}
We first note that 
\begin{equation}
\begin{split}
& \|(U_{1}^{n+1}-U_{1}^{n},U_{2}^{n+1}-U_{2}^{n})\|_m^2 \\
= & \|U_{1}^{n+1}-U_{1}^{n}\|_{m_{11}}^2 + \|U_{2}^{n+1}-U_{2}^{n}\|_{m_{22}}^2 + 2 m_{12}(U_{1}^{n+1}-U_{1}^{n},U_{2}^{n+1}-U_{2}^{n}) \\
\geq & \|U_{1}^{n+1}-U_{1}^{n}\|_{m_{11}}^2 + \|U_{2}^{n+1}-U_{2}^{n}\|_{m_{22}}^2 - 2\gamma \|U_{1}^{n+1}-U_{1}^{n}\|_{m_{11}} \|U_{2}^{n+1}-U_{2}^{n}\|_{m_{22}} \\
\geq & (1-\gamma^2) \|U_{2}^{n+1}-U_{2}^{n}\|_{m_{22}}^2.
\end{split}
\end{equation}
Then we can estimate the discrete energy $E^{n+\frac{1}{2}}$ as
\begin{equation}
\begin{split}
E^{n+\frac{1}{2}}
= &
\dfrac{2}{\tau^2} \|(U_{1}^{n+1}-U_{1}^{n},U_{2}^{n+1}-U_{2}^{n})\|_m^2 
+ \|(U_{1}^{n+1},U_{2}^{n+1})\|_c^2 + \|(U_{1}^{n},U_{2}^{n})\|_c^2
+ \|(U_{1}^{n+1},U_{2}^{n})\|_a^2 + \|(U_{1}^{n},U_{2}^{n+1})\|_a^2 \\
& - \|U_{2}^{n+1}-U_{2}^{n}\|_{a_{22}}^2 \\
\geq & \dfrac{2(1-\gamma^2)}{\tau^2} \|U_{2}^{n+1}-U_{2}^{n}\|_{m_{22}}^2 + \|(U_{1}^{n+1},U_{2}^{n+1})\|_c^2 + \|(U_{1}^{n},U_{2}^{n})\|_c^2
+ \|(U_{1}^{n+1},U_{2}^{n})\|_a^2 + \|(U_{1}^{n},U_{2}^{n+1})\|_a^2 \\
& - \|U_{2}^{n+1}-U_{2}^{n}\|_{a_{22}}^2 \\
\geq & (U_{1}^{n+1},U_{2}^{n+1})\|_c^2 + \|(U_{1}^{n},U_{2}^{n})\|_c^2
+ \|(U_{1}^{n+1},U_{2}^{n})\|_a^2 + \|(U_{1}^{n},U_{2}^{n+1})\|_a^2,
\end{split}
\end{equation}
where the stability condition is applied in the last inequality.
Therefore, $E^{n+\frac{1}{2}}$ defines a nonnegative quantity which remains the same in time, and the discretization scheme 1 is stable.
\end{proof}

For scheme 2, similar energy conservation can be established, leading to the following stability result.

\begin{thm}
The discretization scheme 2 in (\ref{eq:Scheme_2}) is stable if the stability condition
\begin{equation}
\tau^2 \leq 2(1-\gamma^2) \inf_{W\in V_{2,H}}\cfrac{\|W\|_{m_{22}}^{2}}{\|W\|_{a_{22}}^{2}+\|W\|_{c_{22}}^{2}}
\end{equation}
is satisfied.
\end{thm}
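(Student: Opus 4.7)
The plan is to repeat the argument of Theorem~\ref{thm:scheme_1_conservation} and its companion stability result almost verbatim, adjusting only the bookkeeping to reflect that in scheme 2 the $c_{12}$, $c_{21}$, and $c_{22}$ blocks are now evaluated at the explicit time level $U^n$ rather than being split implicitly between $n+1$ and $n-1$. First I would establish a discrete energy conservation identity $E^{n+\frac{1}{2}} = E^{n-\frac{1}{2}}$ by testing the two equations of (\ref{eq:Scheme_2}) with $W_1 = U_1^{n+1}-U_1^{n-1}$ and $W_2 = U_2^{n+1}-U_2^{n-1}$ and summing them. The mass-term algebra is unchanged from Theorem~\ref{thm:scheme_1_conservation} and yields the telescoping quantity $\|(U_1^{n+1}-U_1^n, U_2^{n+1}-U_2^n)\|_m^2$. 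The $a_{ij}$ blocks in scheme 2 are identical to those in scheme 1, so they again contribute $\tfrac{1}{2}\bigl(\|(U_1^{n+1},U_2^n)\|_a^2 + \|(U_1^n, U_2^{n+1})\|_a^2 - \|U_2^{n+1}-U_2^n\|_{a_{22}}^2\bigr)$ telescoping across one time step.

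The new piece of algebra concerns the $c_{ij}$ terms. Because $c_{11}$ is still treated implicitly via the average $\tfrac{1}{2}(U_1^{n+1}+U_1^{n-1})$ while $c_{12}$, $c_{21}$, $c_{22}$ use $U^n$, these terms now have exactly the same structure as the $a_{ij}$ terms in scheme 1, with $a$ replaced by $c$. Using the symmetry relation $c_{12}(U,W) = c_{21}(W,U)$ inherited from $a(\cdot,\cdot)$, together with the splitting identity
\begin{equation*}
c_{22}(U_2^n,U_2^{n+1}-U_2^{n-1}) = \tfrac{1}{2}\bigl(\|U_2^{n+1}\|_{c_{22}}^2 - \|U_2^{n-1}\|_{c_{22}}^2\bigr) - \tfrac{1}{2}\bigl(\|U_2^{n+1}-U_2^n\|_{c_{22}}^2 - \|U_2^n-U_2^{n-1}\|_{c_{22}}^2\bigr),
\end{equation*}
the $c$-contribution telescopes into $\tfrac{1}{2}\bigl(\|(U_1^{n+1},U_2^n)\|_c^2 + \|(U_1^n,U_2^{n+1})\|_c^2 - \|U_2^{n+1}-U_2^n\|_{c_{22}}^2\bigr)$ at the $(n+\tfrac{1}{2})$ level. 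Collecting everything, the conserved energy is
\begin{equation*}
\begin{aligned}
E^{n+\frac{1}{2}} = & \dfrac{2}{\tau^2}\|(U_1^{n+1}-U_1^n, U_2^{n+1}-U_2^n)\|_m^2 + \|(U_1^{n+1},U_2^n)\|_a^2 + \|(U_1^n,U_2^{n+1})\|_a^2 - \|U_2^{n+1}-U_2^n\|_{a_{22}}^2 \\
& + \|(U_1^{n+1},U_2^n)\|_c^2 + \|(U_1^n,U_2^{n+1})\|_c^2 - \|U_2^{n+1}-U_2^n\|_{c_{22}}^2.
\end{aligned}
\end{equation*}

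For stability, I would apply the strengthened Cauchy--Schwarz bound $\|(U_1^{n+1}-U_1^n,U_2^{n+1}-U_2^n)\|_m^2 \geq (1-\gamma^2)\|U_2^{n+1}-U_2^n\|_{m_{22}}^2$ exactly as in the scheme 1 stability proof, and then absorb the two negative boundary terms $-\|U_2^{n+1}-U_2^n\|_{a_{22}}^2$ and $-\|U_2^{n+1}-U_2^n\|_{c_{22}}^2$ into the leading $2/\tau^2$ mass-difference term. The stated CFL constraint is precisely the condition under which this absorption leaves a nonnegative remainder, giving $E^{n+\frac{1}{2}} \geq \|(U_1^{n+1},U_2^n)\|_a^2 + \|(U_1^n,U_2^{n+1})\|_a^2 + \|(U_1^{n+1},U_2^n)\|_c^2 + \|(U_1^n,U_2^{n+1})\|_c^2 \geq 0$, which combined with the conservation identity yields uniform-in-$n$ boundedness of the solution.

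The main technical obstacle is organizational rather than conceptual: one must verify carefully that the off-diagonal explicit couplings $c_{12}(U_2^n, U_1^{n+1}-U_1^{n-1}) + c_{21}(U_1^n, U_2^{n+1}-U_2^{n-1})$ reorganize, via the symmetry of $a(\cdot,\cdot)$, into the mixed norms appearing in $\|(U_1^{n+1},U_2^n)\|_c^2$ and $\|(U_1^n,U_2^{n+1})\|_c^2$ without residual cross terms, and that the $c_{22}$ splitting produces the correct sign for the $\|U_2^{n+1}-U_2^n\|_{c_{22}}^2$ correction so that it combines additively with $\|U_2^{n+1}-U_2^n\|_{a_{22}}^2$ to form the denominator of the CFL bound. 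Once that is confirmed, the remainder of the proof is a direct transcription of the scheme 1 argument.
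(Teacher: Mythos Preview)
Your proposal is correct and follows precisely the approach the paper intends: the paper itself only states that ``similar energy conservation can be established'' for scheme 2, and your observation that the $c_{ij}$ blocks in scheme 2 have exactly the same implicit/explicit pattern as the $a_{ij}$ blocks in scheme 1 is the key structural point that makes the argument go through with the extra $-\|U_2^{n+1}-U_2^n\|_{c_{22}}^2$ term appearing in the conserved energy. The absorption of both negative terms via the strengthened Cauchy--Schwarz bound then produces exactly the stated CFL condition with $\|W\|_{a_{22}}^2+\|W\|_{c_{22}}^2$ in the denominator.
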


The remaining question is how to determine the partition sets $I_1$ and $I_2$. For cases such as two-value fields, $I_1$ and $I_2$ can respectively be chosen as the index sets for the continua associated with the high- and low-value regions. It turns out that the $m_{22}$-norm, the $a_{22}$-norm and the $c_{22}$-norm are all independent of the contrast, resulting in contrast-independent stability conditions, thanks to the construction of the cell problems \cite{efendiev2024multicontinuum}. This result also extends to more complex setting. 
A general strategy for determining the partition will be introduced in the next section. 

\section{Optimized decomposition of the solution space}
\label{sec:construction}

In this section, we present an optimized decomposition of the multicontinuum space $V_{mc,H}$ to relax the stability conditions. For detailed explanations and proofs, we refer the reader to \cite{efendiev2024multicontinuum}.

To consider a more general decomposition, we replace $\phi_{i}$ and $\tilde{\phi}_{i}$ in the previous construction (\ref{eq:first_decomposition}) for $V_{mc,1,H}$ and $V_{mc,2,H}$ with their linear combinations. The superscript $K_{H}(x_l)$ is omitted for brevity. We redefine 
\begin{equation}
\label{eq:Reconstruction_v}
\begin{split}
V_{mc,1,H} &= \left\{ \sum_{x_{l} \in I_H} \mathds{1}_{K_{H}(x_l)} \sum_{i_{0}+1 \leq i \leq N} \left( \sum_j v_{i,j} \phi_{j} \sum_j \hat{v}_{i,j} U_j + \sum_j v_{i,j} \tilde{\phi}_{j}\cdot \sum_j \hat{v}_{i,j} \nabla U_j \right): \; U_j \in V_{H} \right\}, \\
V_{mc,2,H} &= \left\{ \sum_{x_{l} \in I_H} \mathds{1}_{K_{H}(x_l)} \sum_{1\leq i \leq i_{0}} \left( \sum_j v_{i,j} \phi_{j} \sum_j \hat{v}_{i,j} U_j + \sum_j v_{i,j} \tilde{\phi}_{j}\cdot \sum_j \hat{v}_{i,j} \nabla U_j \right): \;U_j \in V_{H} \right\},
\end{split}
\end{equation}
where $i_0$ is an integer between $1$ and $N$, $v_i=(v_{i,j})_{1\leq j \leq N} \in \mathbb{R}^N$ are linearly independent, and $\\(v_{i,j})_{N \times N} (\hat{v}_{i,j})_{N \times N} = I_{N\times N}$. It follows that $V_{mc,H}=V_{mc,1,H}+V_{mc,2,H}$. Then the multicontinuum splitting schemes can be similarly formulated using the new basis functions $\hat{\phi}_i = \sum_j v_{i,j} \phi_{j}$ and $\hat{\tilde{\phi}}_i = \sum_j v_{i,j} \tilde{\phi}_{j}$, along with the macroscopic variables $\hat{U}_i=\sum_j \hat{v}_{i,j} U_j$.

Now we aim to formulate an optimization problem to determine the optimized integer $i_0$ and linearly independent vectors $\{v_i\}_{i=1,2,\ldots,N}$ such that the stability conditions can be relaxed. For a given coarse mesh $\mathcal{T}_H$, we define $d \times d$ matrices $A^{kl}$ and an $N \times N$ symmetric matrix $M$ by
\begin{equation}
    A^{kl}_{mn} = \dfrac{1}{|K|} \int_K \kappa \nabla \phi_k^m \cdot \nabla \phi_l^n,
\end{equation}
and
\begin{equation}
M_{kl} = \dfrac{1}{|K|} \int_K  \phi_k \phi_l,
\end{equation}
where $k,l \in \{1,2,...,N\}$, and $K\in \mathcal{T}_H$. Then we define a rank-4 tensor $A$ by $A_{kmln}=A_{mn}^{kl}$. Next, we formulate a Rayleigh quotient problem involving tensors. For $1 \leq i \leq N$, we want to find a subspace $S_i \subset \mathbb{R}^N$ and a number $\lambda_i \in \mathbb{R}$ such that
\begin{equation}
\label{eq:Scheme_1_Rayleigh}
\begin{split}
S_i & = \argmin_{\substack{S\subset\mathbb{R}^N \\ \dim(S)=i}} \max_{v\otimes w\in S\otimes\mathbb{R}^d} 
\dfrac{(A:(v\otimes w)):(v\otimes w)}{((M\otimes I):(v\otimes w)):(v\otimes w)}, \\
\lambda_i & = \min_{\substack{S\subset\mathbb{R}^N \\ \dim(S)=i}} \max_{v\otimes w\in S\otimes\mathbb{R}^d}
\dfrac{(A:(v\otimes w)):(v\otimes w)}{((M\otimes I):(v\otimes w)):(v\otimes w)}.
\end{split}
\end{equation}
Here, $\otimes$ and $:$ represent the tensor product and the double dot product, respectively.
We let $i_{0}$ denote the number of small $\lambda_i$'s, and select an orthonormal basis $\{v_i\}_{1 \leq i \leq N}$ of $\mathbb{R}^N$ with respect to $M$, satisfying $v_i^T M v_j = \delta_{ij}$, such that
\begin{equation}
\label{eq:Scheme_1_v_i_construction}
S_{i_{0}} = \operatorname*{span}_{1 \leq i \leq i_{0}} \{v_i\},\quad
\mathbb{R}^N = S_{i_{0}} \oplus \operatorname*{span}_{i_{0}+1 \leq i \leq N} \{v_i\}.
\end{equation}
It can be shown that $\gamma=0$. 
Here we assume that the decompositions for different coarse blocks are similar for simplicity, allowing the formulated quotient problem to be localized within a single coarse block $K$ rather than being global.

We now briefly highlight several advantages of the above construction.
First, explicit forms of the stability conditions can be given under such construction. For the chosen finite element space $V_{2,H}$, let $C_1$ be a constant independent of the mesh size $H$, such that 
\begin{equation}
\sup_{W\in V_{2,H}}\dfrac{\|W\|_{H^{1}}^{2}}{\|W\|_{L^{2}}^{2}} \leq \dfrac{C_1}{H^{2}}.  
\end{equation}
Then, the right-hand side of the stability condition for scheme 1 closely approximates and converges to $\sqrt{2} C_1^{-1/2} \lambda_{i_0}^{-1/2} H$ as $H$ decreases, i.e., $\tau \lesssim \sqrt{2} C_1^{-1/2} \lambda_{i_0}^{-1/2} H$.
Similarly, the stability condition for scheme 2 can be expressed as $\tau \lesssim \sqrt{2} (C_1 \lambda_{i_0} + \operatorname{max\;eig}(H^2 C,M))^{-1/2} H$, where the $N\times N$ symmetric matrix $C$ is defined by $C_{ij} = \dfrac{1}{|K|} \int_K \kappa \nabla \phi_i \cdot \nabla \phi_j$. Additionally, as discussed in Section \ref{sec:schemes}, the schemes can be decoupled since $\gamma$ is close to zero and vanishes with decreasing $H$. 
Furthermore, the solution to the above optimization problem is independent of the choice of basis for $\operatorname*{span}_{i}\{\psi_i\}$, which implies that the new continua obtained through the space decomposition have physical significance.

In the numerical examples, instead of directly solving the above tensor-based min-max problem, we adopt an eigendecomposition approach to reduce computational effort. We define a rank-reduced tensor $\tilde{A}$ by $\tilde{A}_{kl}=\operatorname*{max\;eig}(A^{kl})$, and consider $\tilde{A}$ as a symmetric matrix of size $N \times N$. We then perform the generalized eigenvalue decomposition of $(\tilde{A},M)$:
\begin{equation}
\label{eq:Scheme_1_eig}
    \tilde{A} v_i= \lambda M v_i,
\end{equation}
and select $i_0$ as the number of small eigenvalues and $\{v_i\}_{1 \leq i \leq N}$ as the $M$-orthonormal eigenvectors of $\tilde{A}$, corresponding to the eigenvalues $\lambda_i$ arranged in ascending order. The same stability results hold under certain assumption. The obtained solution, however, is suboptimal.

\section{Numerical examples}
\label{sec:numerical}

In this section, we present some numerical results. 
As stated before, we consider the wave equation
\begin{equation}
    \partial_{tt} u(x,t) - \nabla \cdot (\kappa(x) \nabla u(x,t)) = f(x,t), \quad x \in \Omega, \quad t\in (0,T],
\end{equation}
with zero Dirichlet boundary condition and zero initial conditions, where $\kappa$ is a high-contrast coefficient.
We assume that the computational domain $\Omega$ is partitioned into a uniform grid of square coarse blocks $K$ of the same size $H$. 
To mitigate boundary effects, we define the oversampled domain $K^{+}$ by extending $K$ by $l=\lceil -2\ln(H) \rceil$ layers of coarse blocks \cite{hou1997multiscale,chung2018constraint}. 
The relative $L^2$ errors of the solution in continuum $i$ at time $t$ is defined as
\begin{equation}
\label{eq:def_relative_error}
    e^{(i)}(t)=\sqrt{ \frac{\sum_K|\frac1{|K|}\int_K U_i(x,t) dx -  \frac{1}{\int_{K}\psi_i(x) dx}\int_{K}u(x,t)\psi_i(x) dx|^2}{\sum_K|\frac{1}{\int_{K}\psi_i(x) dx}\int_{K}u(x,t)\psi_i(x) dx|^2} },
\end{equation}
where $U_i$ is the multiscale solution in continuum $i$, and $u$ is the reference solution computed on the fine grid. 

In the following examples, we set $\Omega = (0,1)^2$ and choose the source term as $f(x,t)=1000 e^{-40 \left( (x_1-0.5)^2+(x_2-0.5)^2 \right)} e^{-40t}$ for $x=(x_1,x_2)\in \Omega$. The time step size is set to $\tau = 10^{-3}$, and the final time is $T=0.05$. The fine mesh size $h$ is fixed at $1/400$.

\subsection{Example 1: Layered field with two continua}

As the first example, we consider a layered coefficient field $\kappa$ given by
\begin{equation}
\kappa(x)=
\begin{cases}
1,&\quad x \in \Omega_1, \\
10^{3}, & \quad x \in \Omega_2,
\end{cases}
\end{equation}
where $\Omega_1$ (blue) and $\Omega_2$ (yellow) are the subregions shown in Figure \ref{fig:Example_1_setting}. The auxiliary functions $\psi_1$ and $\psi_2$ are chosen as the characteristic functions in the regions of low and high values respectively; namely, we define $\psi_1(x) = \mathds{1}_{\Omega_1}(x)$ and $\psi_2(x) = \mathds{1}_{\Omega_2}(x)$ in each coarse block.

Now we apply the generalized eigenvalue decomposition in (\ref{eq:Scheme_1_eig}), as discussed in Section \ref{sec:construction}, to decompose the multicontinuum space and reconstruct the subspaces $V_{mc,1,H}$ and $V_{mc,2,H}$. The results of the eigenvalue problems are listed in Table \ref{tab:Example_1_eigen}. It can be observed that the two eigenvalues differ significantly, and the eigenvector corresponding to the smaller eigenvalue has an almost zero second coordinate. This indicates that the first continuum represents slow dynamics and should be treated explicitly to reduce computational cost, while the other one represents fast dynamics and should be treated implicitly to ensure the stability. These observations are consistent with physical intuition.

With the solution space decomposition following from the above results, we compute numerical solutions for the wave equation under different discretization schemes. We present the relative $L^2$ errors for coarse mesh sizes $H=1/10$ and $H=1/20$ as time progresses in Figures \ref{fig:Example_1_error_1} and \ref{fig:Example_1_error_2}. The plots on the left and right respectively describe the errors in the low- and high-value regions. The reference solution is computed on the fine grid using the three-layer implicit discretization. 
In these figures, the red and yellow curves, representing discretization schemes 1 and 2, nearly coincide with the blue curve, which corresponds to the implicit scheme based on multicontinuum homogenization. In contrast, the explicit scheme produces solutions that are unstable and fail to converge for the given time step size; hence, they are not depicted. These results suggest that our proposed schemes, which treat selected degrees of freedom explicitly, achieve comparable accuracy to the implicit scheme, which treats all the degrees of freedom implicitly. In addition, we can also notice that the accuracy improves as the coarse mesh size $H$ becomes finer.

\begin{figure}
    \centering
    \includegraphics[width=7cm]{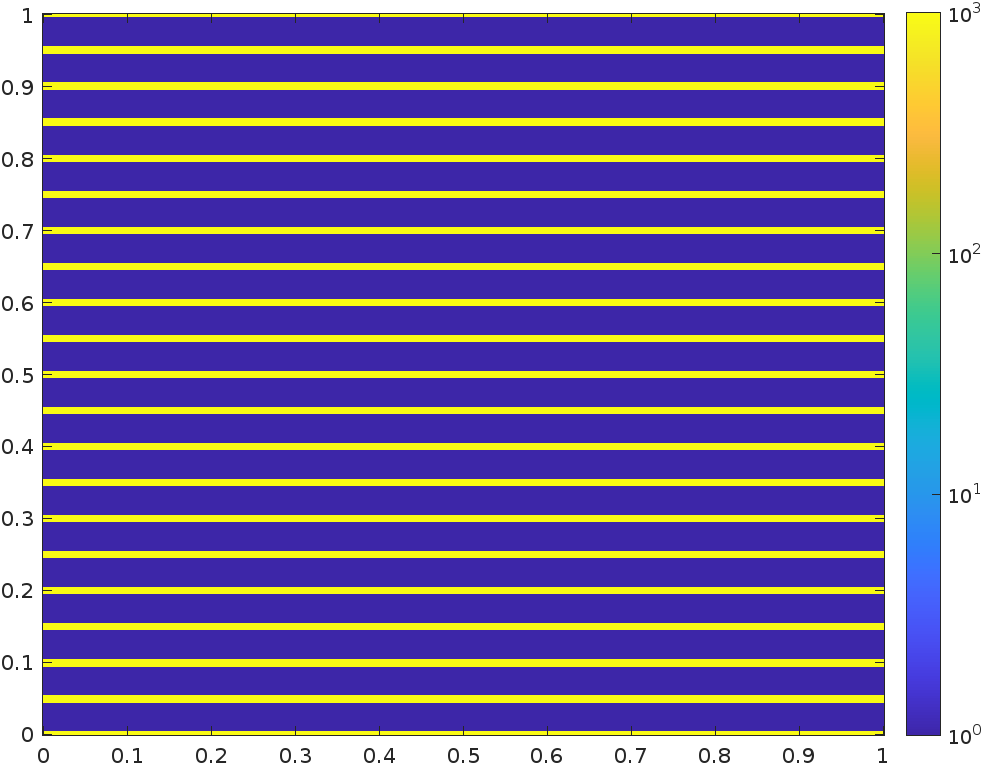}
    \quad
    \includegraphics[width=7cm]{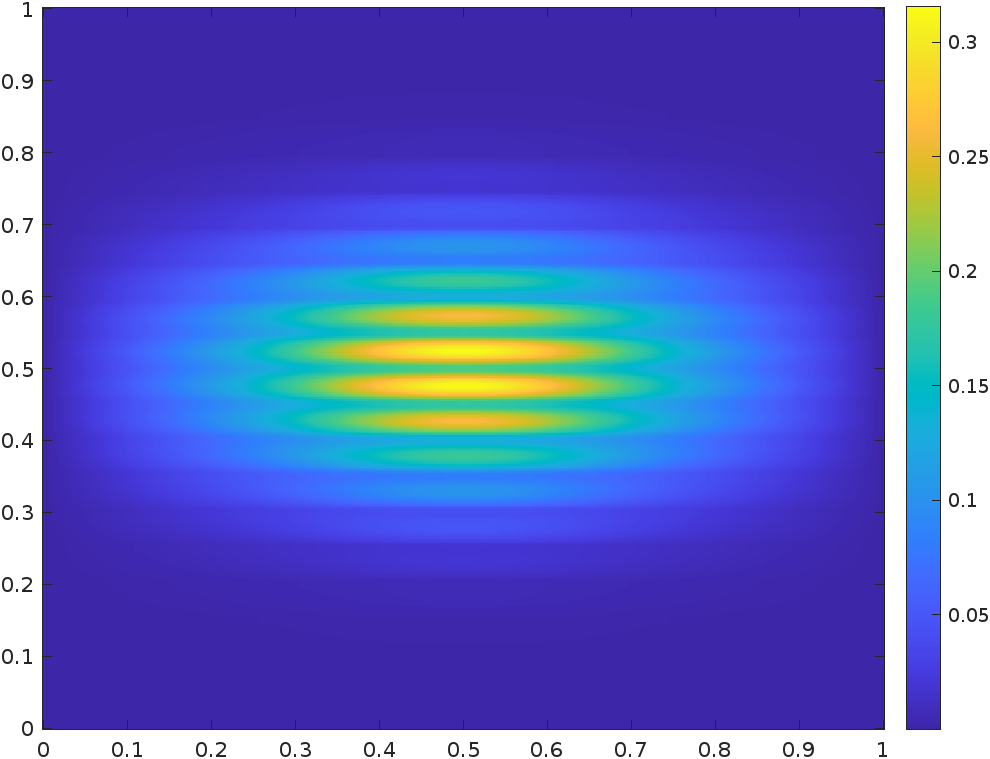}
    \caption{Left: Layered field $\kappa$ in Example 1. Right: Reference solution at the final time $T$ in Example 1.}
    \label{fig:Example_1_setting}
\end{figure}

\begin{table}
    \centering
    \caption{Eigenvalues and corresponding eigenvectors of eigenvalue problems for different coarse mesh sizes $H$ in Example 1.}
    \begin{tabular}{|c|c|c|c|}
         \hline
         $H$ & $l$ & eigenvalues $\lambda_i$ &  eigenvectors $v_i$ \\
         \hline
         \multirow{2}*{1/10} & \multirow{2}*{5} & $3.9487$ & $(1.0195, -0.0032)^T$ \\
         \cline{3-4}
         & & $609.0326$ & $(0.2920, 1.7299)^T$ \\
         \hline
         \multirow{2}*{1/20} & \multirow{2}*{6} & $2.0524$ & $(1.0197, -0.0017)^T$ \\
         \cline{3-4}
         & & $601.8824$ & $(0.2911, 1.7299)^T$ \\
         \hline
    \end{tabular}   
    \label{tab:Example_1_eigen}
\end{table}

\begin{figure}
  \centering
  \includegraphics[width=7cm]{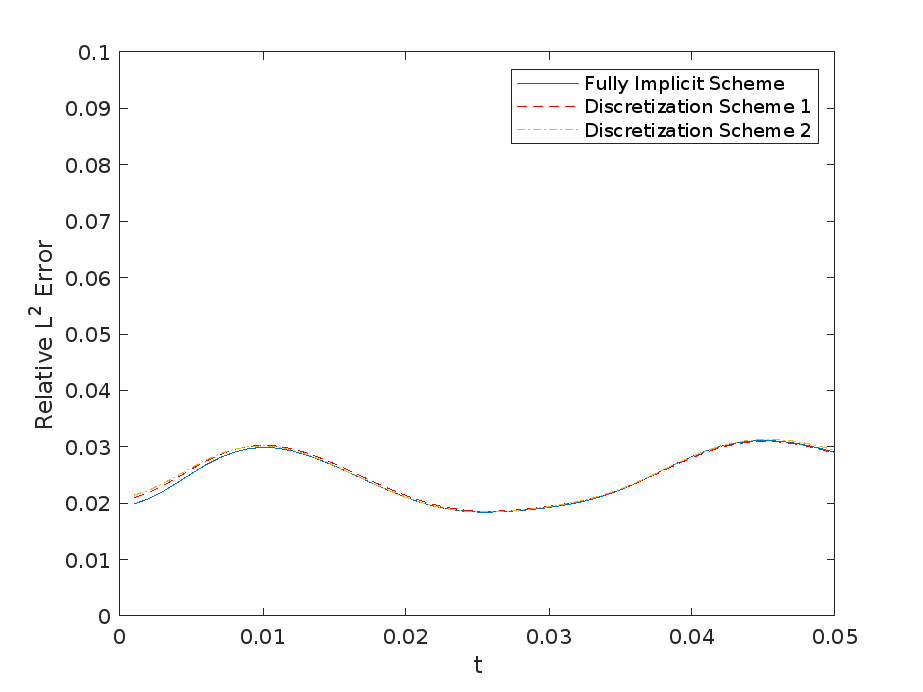}
  \quad
  \includegraphics[width=7cm]{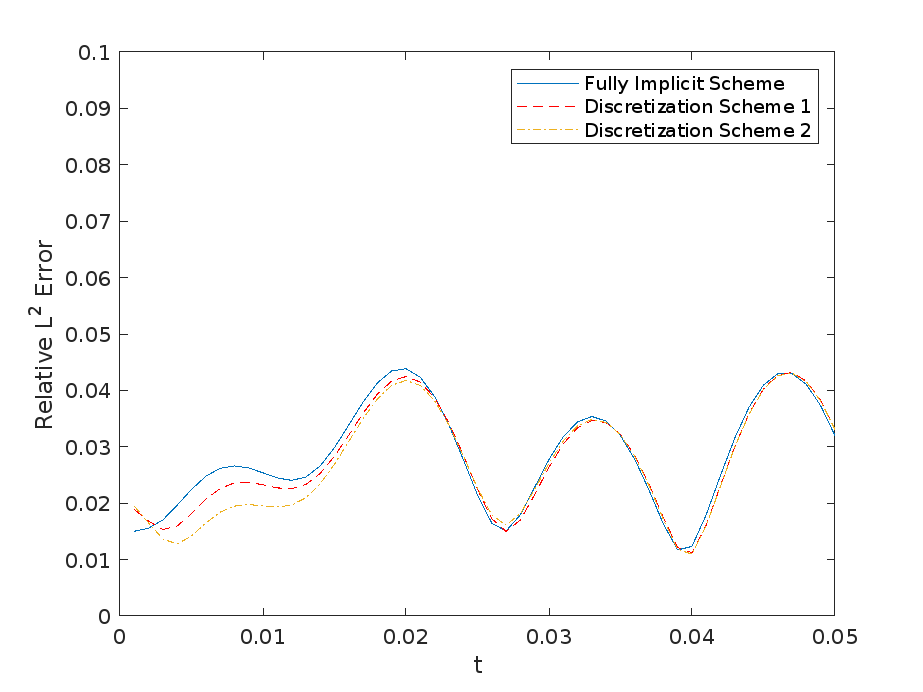}
  \quad
  \caption{Relative $L^2$ error for different schemes when $H=1/10$ and $l=5$ in Example 1. Left: $e^{(1)}(t)$. Right: $e^{(2)}(t)$.}
  \label{fig:Example_1_error_1}
\end{figure}

\begin{figure}
  \centering
  \includegraphics[width=7cm]{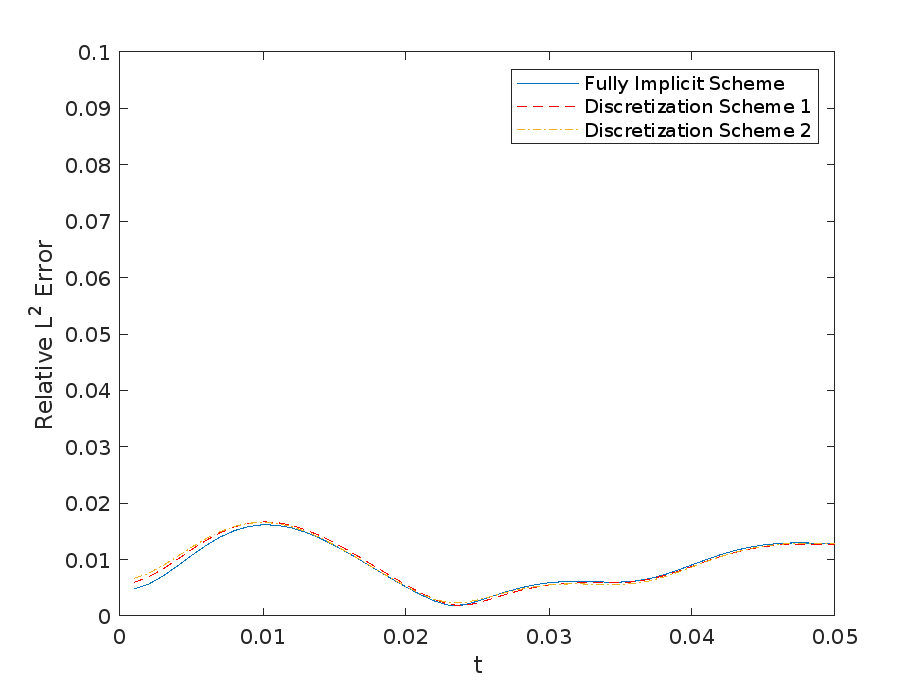}
  \quad
  \includegraphics[width=7cm]{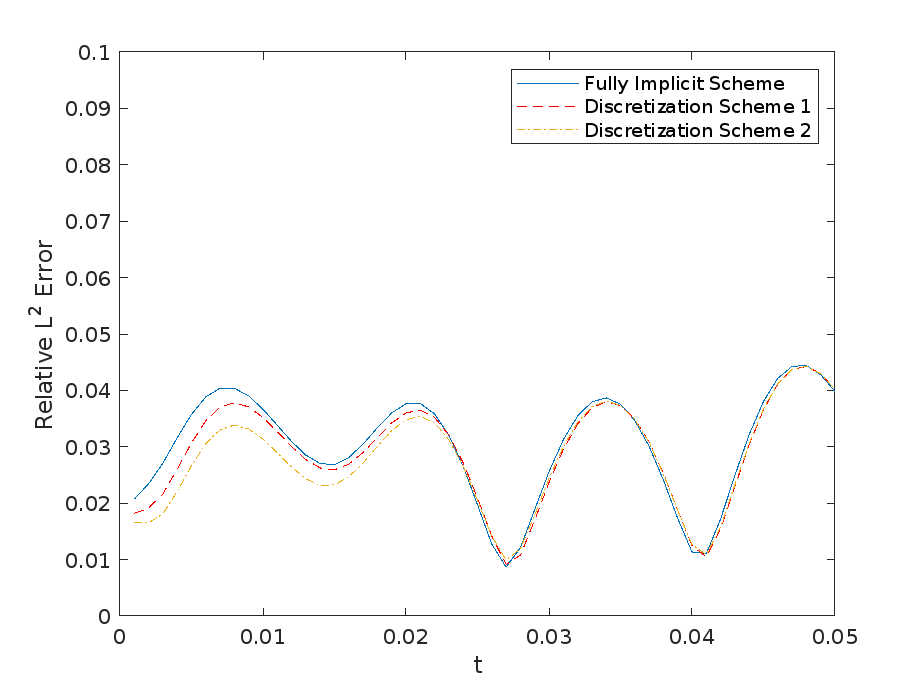}
  \quad
  \caption{Relative $L^2$ error for different schemes when $H=1/20$ and $l=6$ in Example 1. Left: $e^{(1)}(t)$. Right: $e^{(2)}(t)$.}
  \label{fig:Example_1_error_2}
\end{figure}

\subsection{Example 2: Point field with two continua}

In the second example, we take $\kappa$ to be a point field, as illustrated in Figure \ref{fig:Example_2_setting}, with two high-contrast values, $1$ and $10^{3}$. The auxiliary functions $\psi_1$ and $\psi_2$ are again chosen as the characteristic functions in the low- and high-value regions respectively.

We can similarly obtain the results of eigenvalue problems, which are listed in Table \ref{tab:Example_2_eigen}. It can be seen that the eigenvector corresponding to the smaller eigenvalue is still nearly the canonical basis vector $e_1=(1,0)^T$. This indicates that the first continuum is identified as representing slow dynamics and is processed explicitly.

Using the space decomposition described above, we compute the numerical solutions for the wave equation under different discretization schemes. The relative $L^2$ errors for coarse mesh sizes $H=1/10$ and $H=1/20$  are plotted in Figures \ref{fig:Example_2_error_1} and \ref{fig:Example_2_error_2}. The reference solution at the final time $T$ is depicted in Figure \ref{fig:Example_2_setting}. Comparable patterns to those in Example 1 are observed. The solutions obtained using the implicit scheme based on homogenization, as well as discretization schemes 1 and 2, exhibit small and nearly identical errors, all of which closely match the reference solution.

\begin{figure}
    \centering
    \includegraphics[width=7cm]{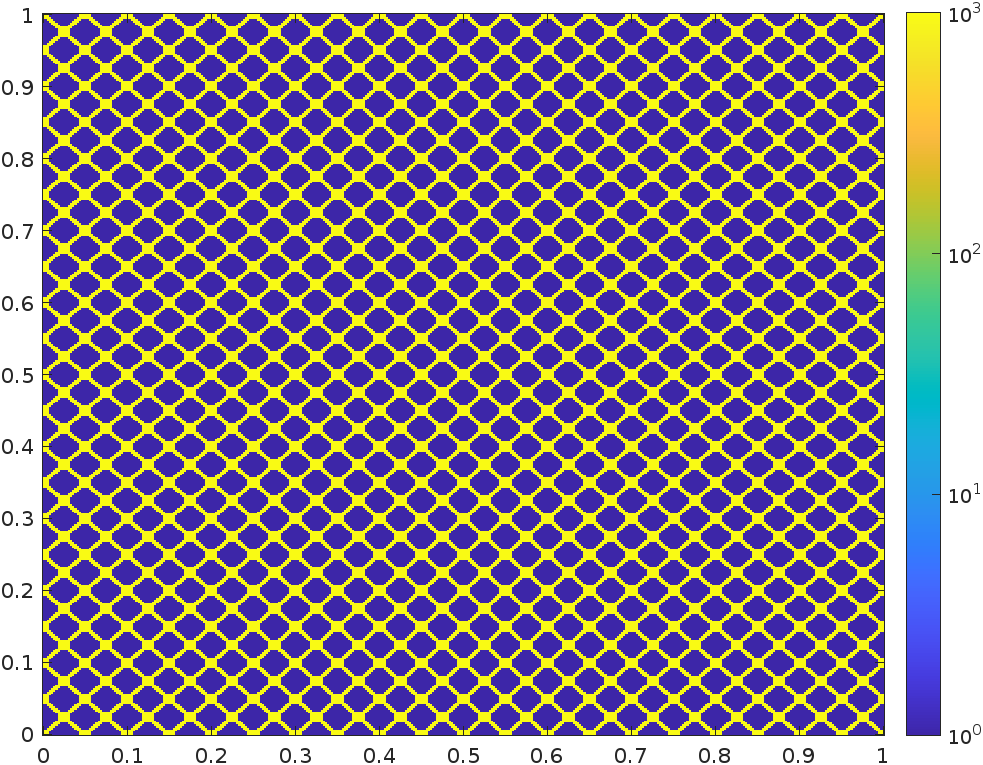}
    \quad
    \includegraphics[width=7cm]{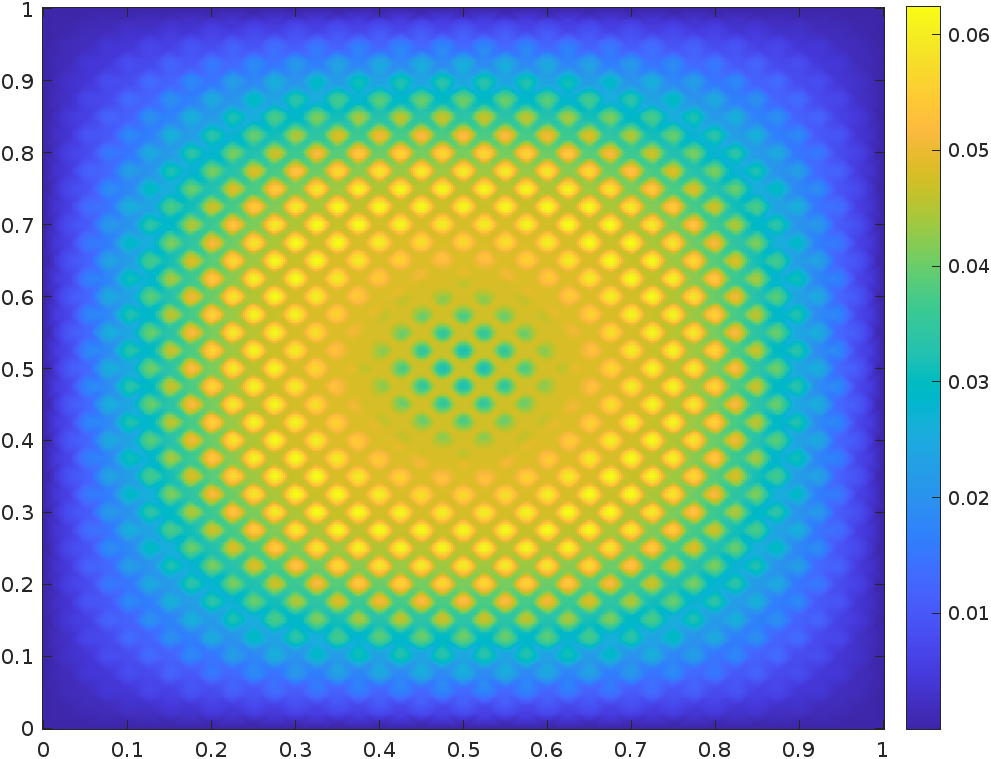}
    \caption{Left: Point field $\kappa$ in Example 2. Right: Reference solution at the final time $T$ in Example 2.}
    \label{fig:Example_2_setting}
\end{figure}

\begin{table}
    \centering
    \caption{Eigenvalues and corresponding eigenvectors of eigenvalue problems for different coarse mesh sizes $H$ in Example 2.}
    \begin{tabular}{|c|c|c|c|}
         \hline
         $H$ & $l$ & eigenvalues $\lambda_i$ &  eigenvectors $v_i$ \\
         \hline
         \multirow{2}*{1/10} & \multirow{2}*{5} & $9.6769$ & $(1.0244, 0.0321)^T$ \\
         \cline{3-4}
         & & $362.4629$ & $(0.3900, 1.3817)^T$ \\
         \hline
         \multirow{2}*{1/20} & \multirow{2}*{6} & $4.9011$ & $(1.0191, 0.0137)^T$ \\
         \cline{3-4}
         & & $356.7905$ & $(0.4036, 1.3820)^T$ \\
         \hline
    \end{tabular}   
    \label{tab:Example_2_eigen}
\end{table}

\begin{figure}
  \centering
  \includegraphics[width=7cm]{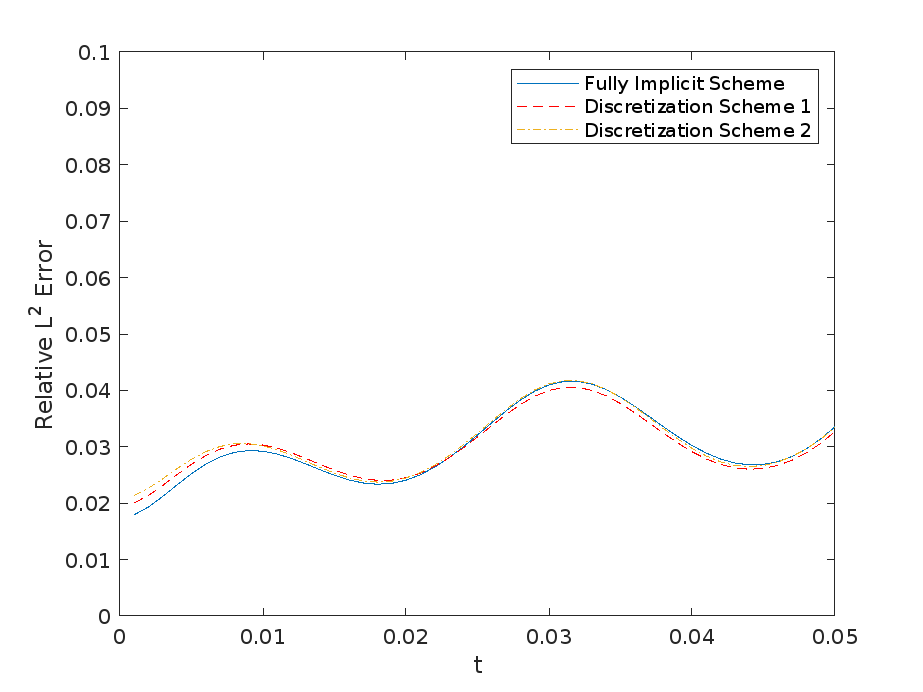}
  \quad
  \includegraphics[width=7cm]{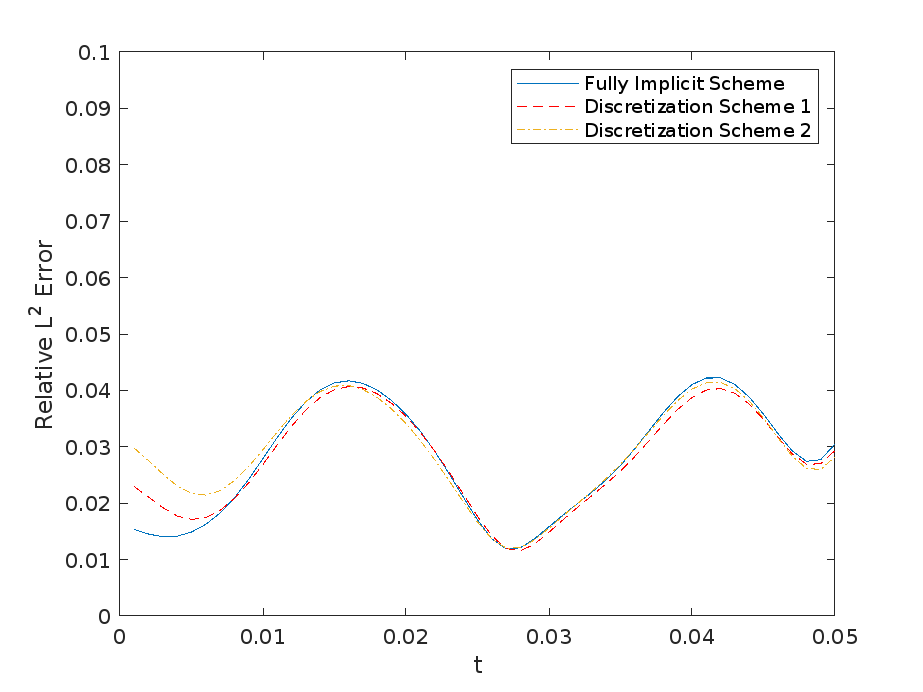}
  \quad
  \caption{Relative $L^2$ error for different schemes when $H=1/10$ and $l=5$ in Example 2. Left: $e^{(1)}(t)$. Right: $e^{(2)}(t)$.}
  \label{fig:Example_2_error_1}
\end{figure}

\begin{figure}
  \centering
  \includegraphics[width=7cm]{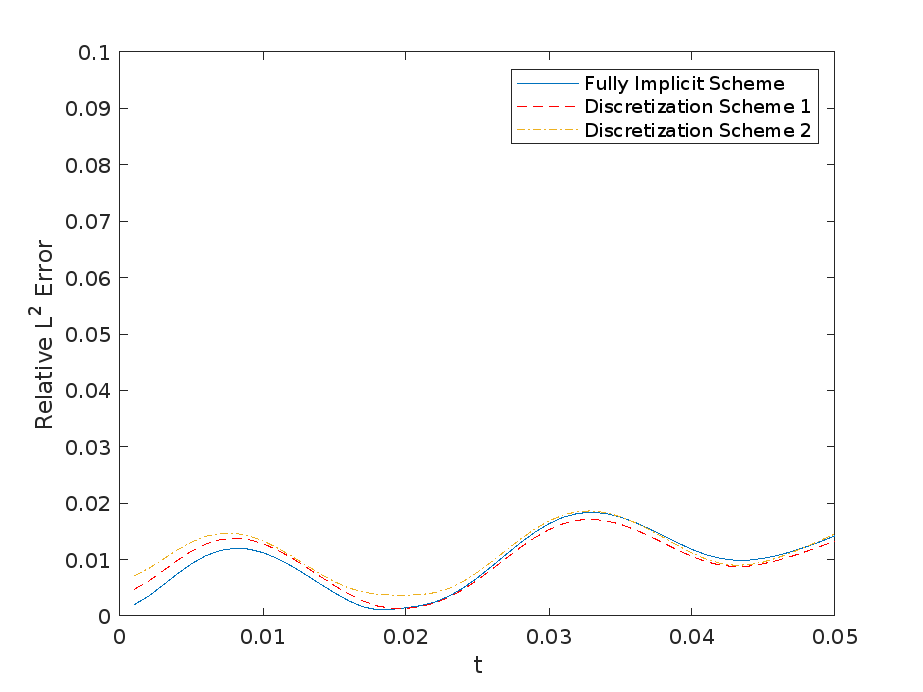}
  \quad
  \includegraphics[width=7cm]{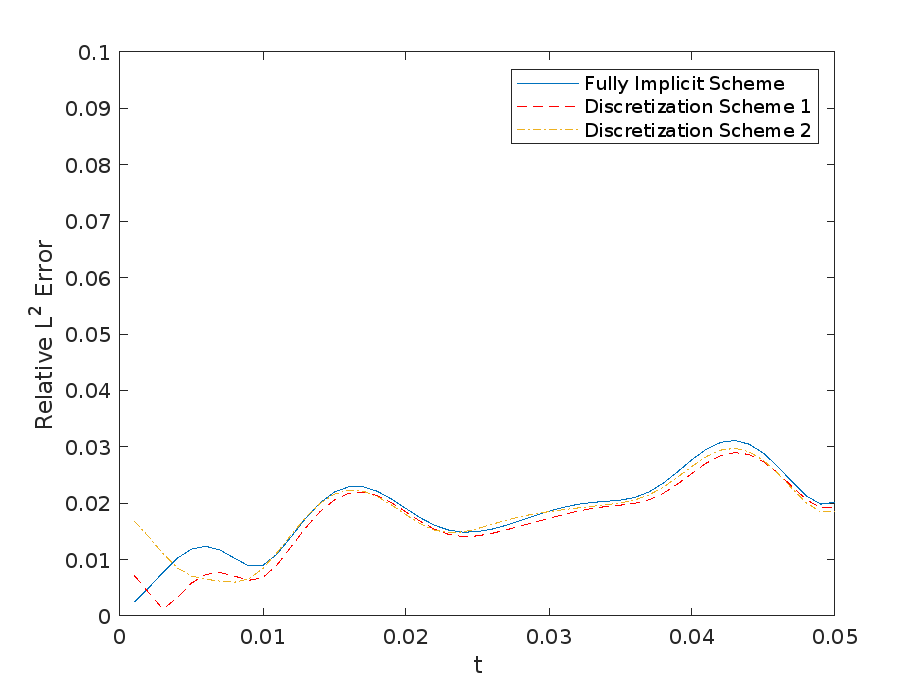}
  \quad
  \caption{Relative $L^2$ error for different schemes when $H=1/20$ and $l=6$ in Example 2. Left: $e^{(1)}(t)$. Right: $e^{(2)}(t)$.}
  \label{fig:Example_2_error_2}
\end{figure}

\subsection{Example 3: Layered field with three continua}

In the following two examples, we consider the multiple-continuum case. Here, we examine a three-value layered field $\kappa$ defined as
\begin{equation}
\kappa(x)=
\begin{cases}
1,&\quad x \in \Omega_1, \\
10^{3}, & \quad x \in \Omega_2, \\
10, & \quad x \in \Omega_3, \\
\end{cases}
\end{equation}
where $\Omega_1$, $\Omega_2$ and $\Omega_3$ correspond to the dark blue, yellow and light blue regions shown in Figure \ref{fig:Example_3_setting}. We choose the auxiliary functions $\psi_1$, $\psi_2$ and $\psi_3$ as the characteristic functions in the above three regions respectively; that is, $\psi_1(x) = \mathds{1}_{\Omega_1}(x)$, $\psi_2(x) = \mathds{1}_{\Omega_2}(x)$ and $\psi_3(x) = \mathds{1}_{\Omega_3}(x)$, in each coarse block.

We then apply the generalized eigenvalue decomposition. The results are listed in Table \ref{tab:Example_3_eigen}. It is observed that the second coordinates of the eigenvectors paired with the two smaller eigenvalues are almost zero. This suggests that the continua associated with the relatively low-value regions are considered to represent slow modes and are handled explicitly, while the continuum associated with the high-value regions is considered to represent fast modes and is handled implicitly. These results align with expectations.

Using the solution space decomposition derived from these results, we compute numerical solutions under different schemes. The reference solution at time $T$ is depicted in Figure \ref{fig:Example_3_setting}. The relative $L^2$ errors are presented in Figures \ref{fig:Example_3_error_1} and \ref{fig:Example_3_error_2}. The figures on the left, middle and right illustrate the relative errors in the low-, high- and medium-value regions respectively. It can be noted that the partially explicit schemes, which require less computational effort, achieve accuracy comparable to the implicit scheme. In contrast, the explicit scheme, not shown in the figures, diverges immediately for the given time step size. Furthermore, increased accuracy can be attained by using a finer coarse mesh size $H$.

\begin{figure}
    \centering
    \includegraphics[width=7cm]{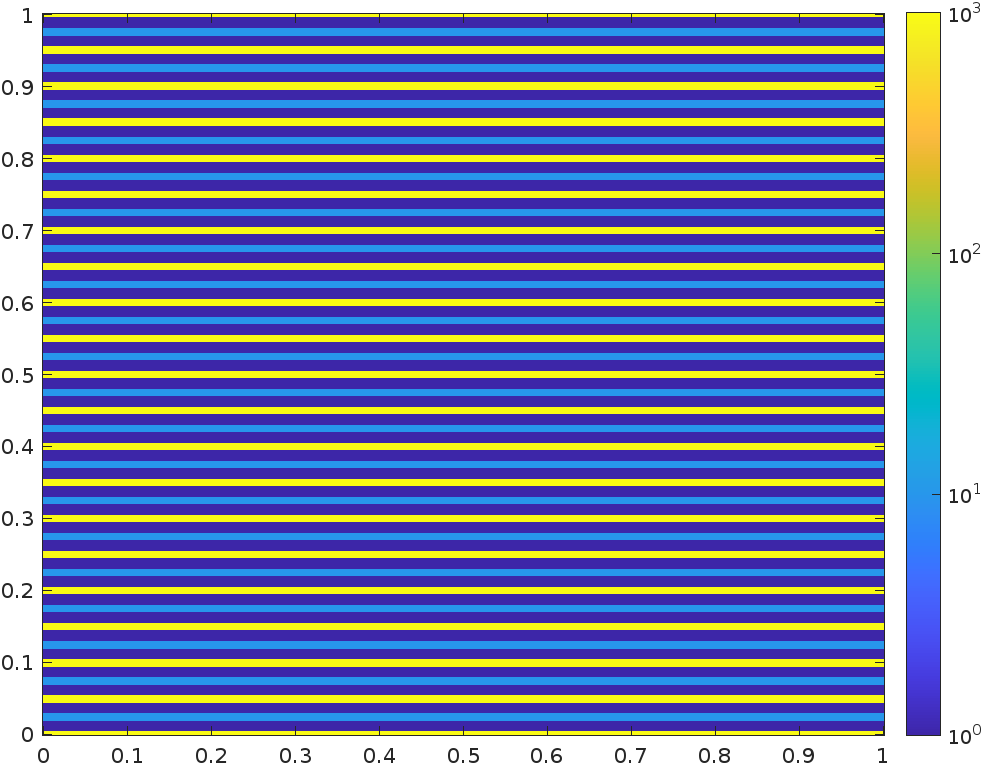}
    \quad
    \includegraphics[width=7cm]{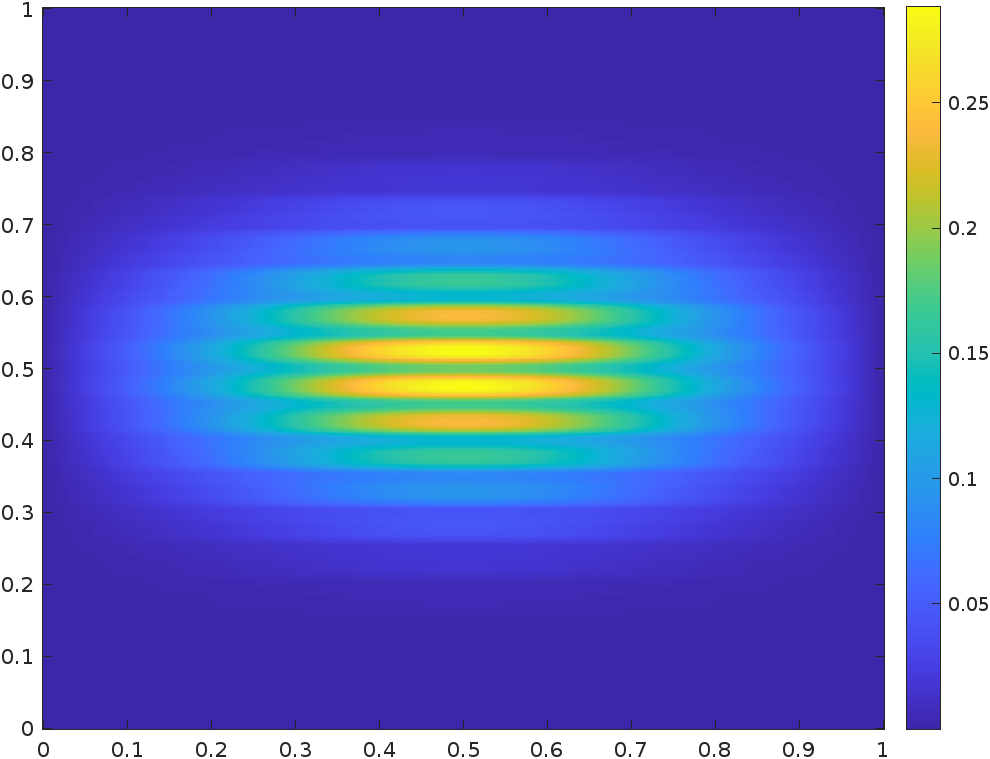}
    \caption{Left: Three-continuum field $\kappa$ in Example 3. Right: Reference solution at the final time $T$ in Example 3.}
    \label{fig:Example_3_setting}
\end{figure}

\begin{table}
    \centering
    \caption{Eigenvalues and corresponding eigenvectors of eigenvalue problems for different coarse mesh sizes $H$ in Example 3.}
    \begin{tabular}{|c|c|c|c|}
         \hline
         $H$ & $l$ & eigenvalues $\lambda_i$ &  eigenvectors $v_i$ \\
         \hline
         \multirow{3}*{1/10} & \multirow{3}*{5} & $10.2379$ & $(1.1485, -0.0034, -0.2543)^T$ \\
         \cline{3-4}
         & & $20.3101$ & $(0.3039, -0.0057, 1.9040)^T$ \\
         \cline{3-4}
         & & $743.7991$ & $(0.1841, 1.9078, 0.1748)^T$ \\
         \hline
         \multirow{3}*{1/20} & \multirow{3}*{6} & $5.1658$ & $(1.1599, -0.0016, -0.1782)^T$ \\
         \cline{3-4}
         & & $12.3465$ & $(0.2580, -0.0054, 1.9126)^T$ \\
         \cline{3-4}
         & & $733.0761$ & $(0.1829, 1.9078, 0.1749)^T$ \\
         \hline
    \end{tabular}   
    \label{tab:Example_3_eigen}
\end{table}

\begin{figure}
  \centering
  \includegraphics[width=5cm]{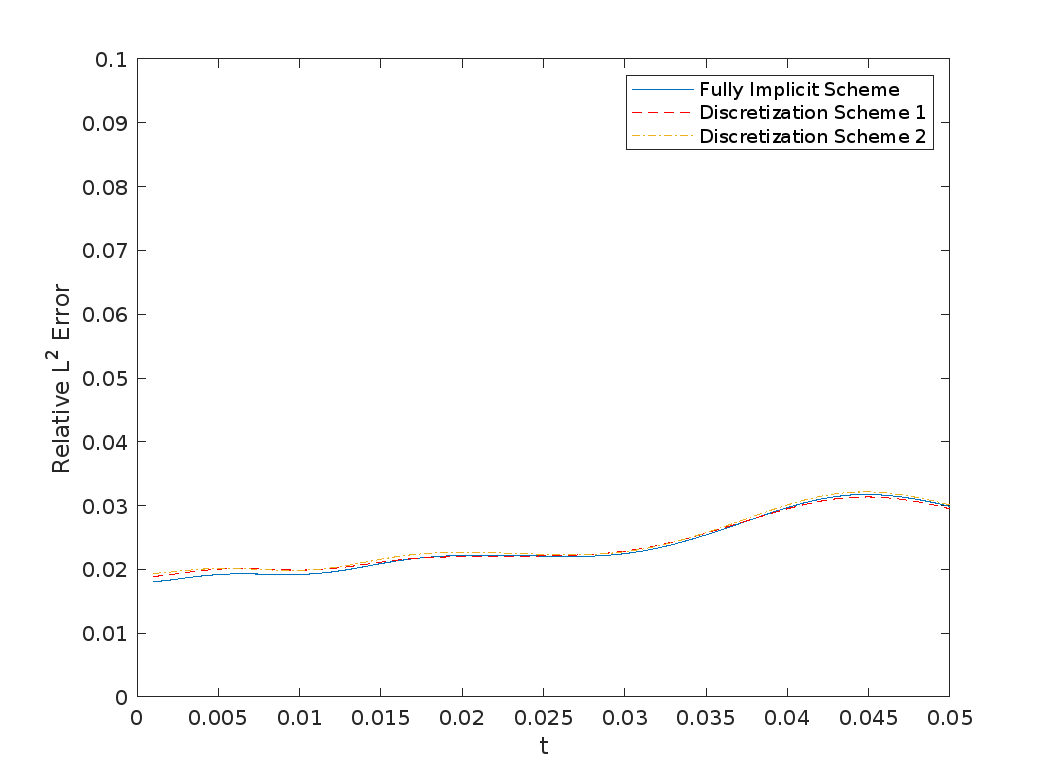}
  \quad
  \includegraphics[width=5cm]{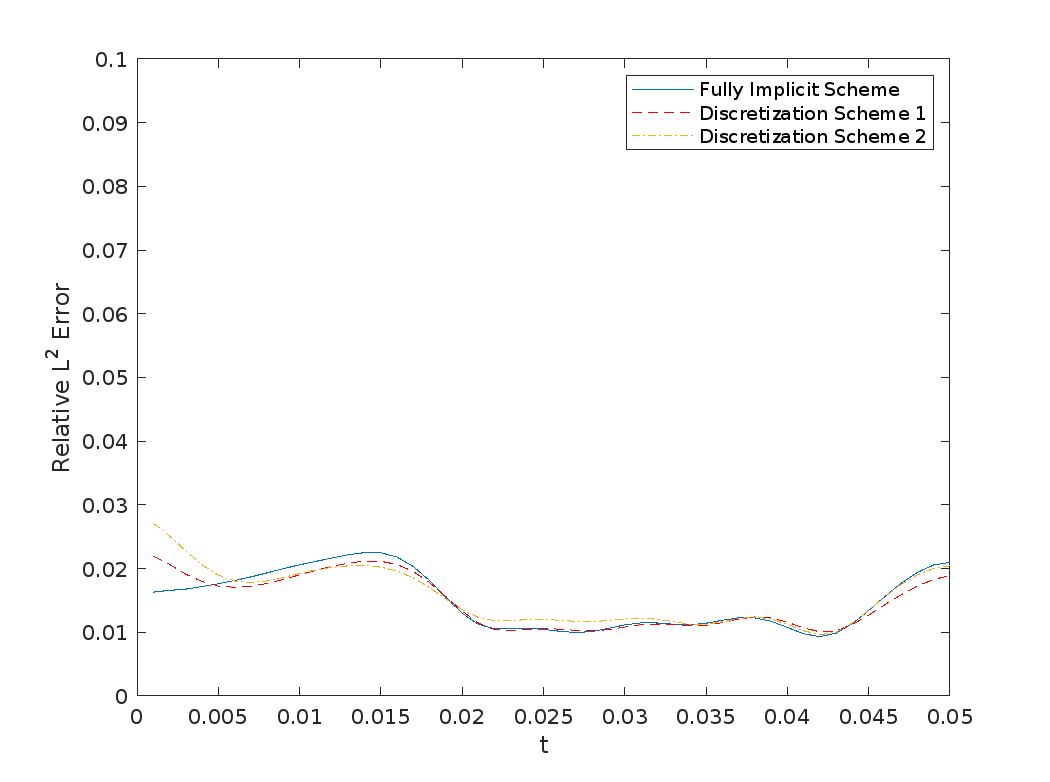}
  \quad
  \includegraphics[width=5cm]{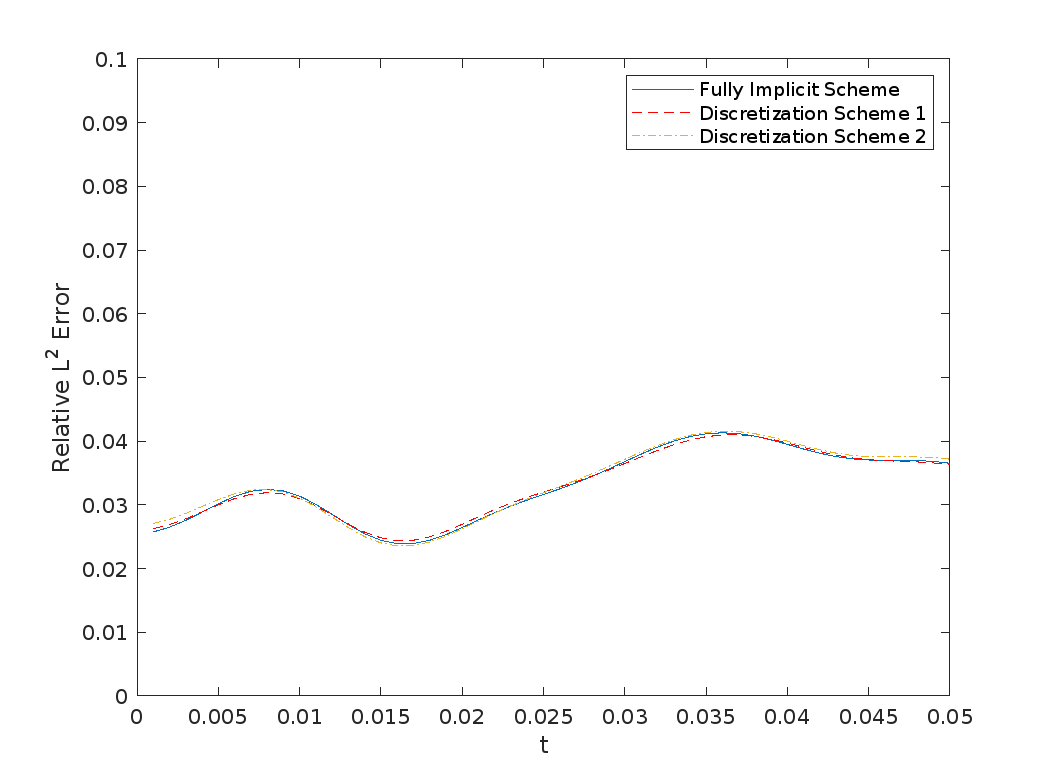}
  \caption{Relative $L^2$ error for different schemes when $H=1/10$ and $l=5$ in Example 3. Left: $e^{(1)}(t)$. Middle: $e^{(2)}(t)$. Right: $e^{(3)}(t)$.}
  \label{fig:Example_3_error_1}
\end{figure}

\begin{figure}
  \centering
  \includegraphics[width=5cm]{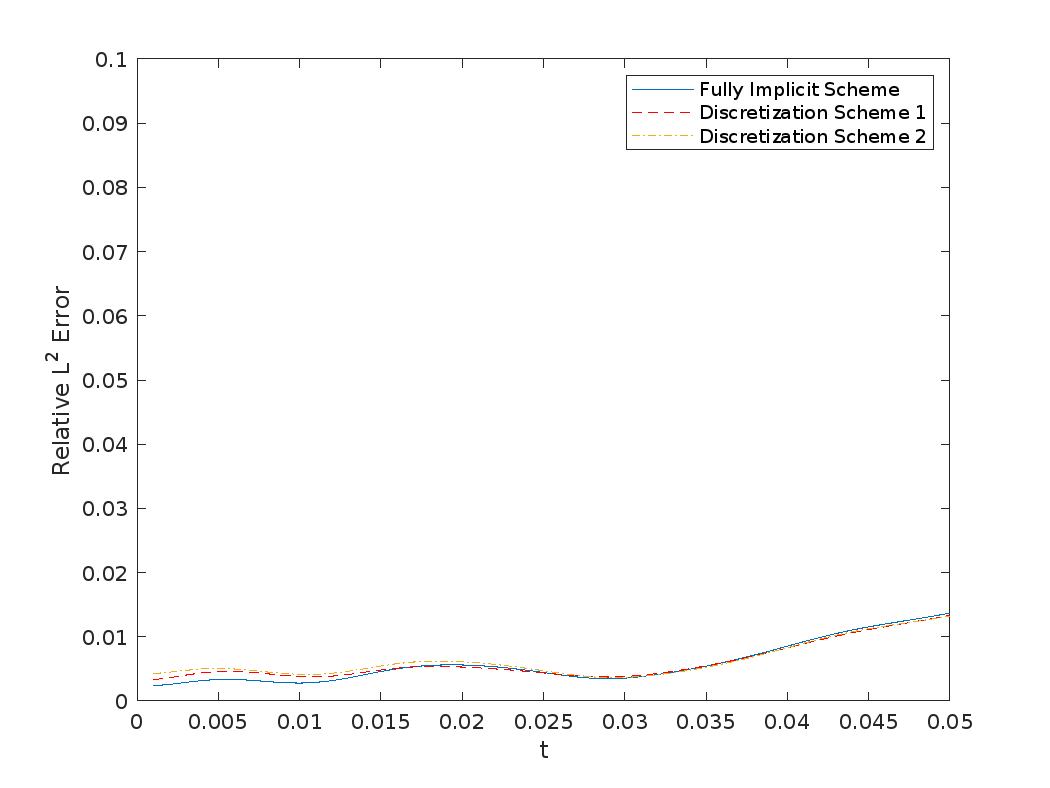}
  \quad
  \includegraphics[width=5cm]{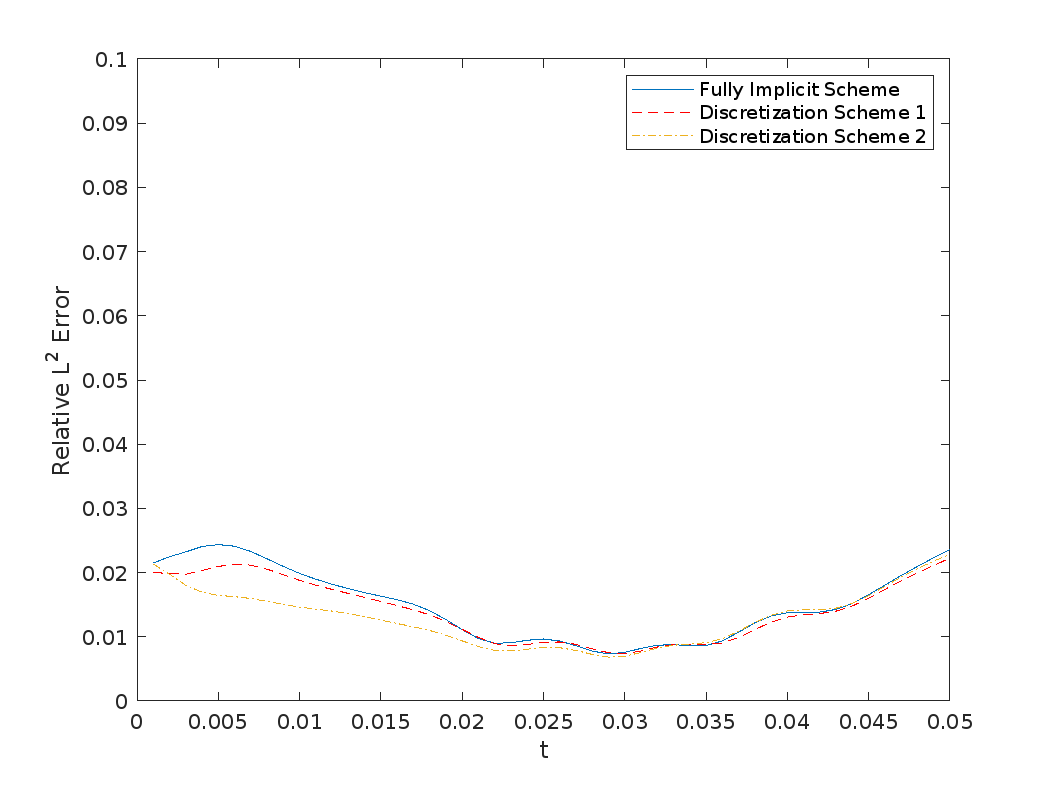}
  \quad
  \includegraphics[width=5cm]{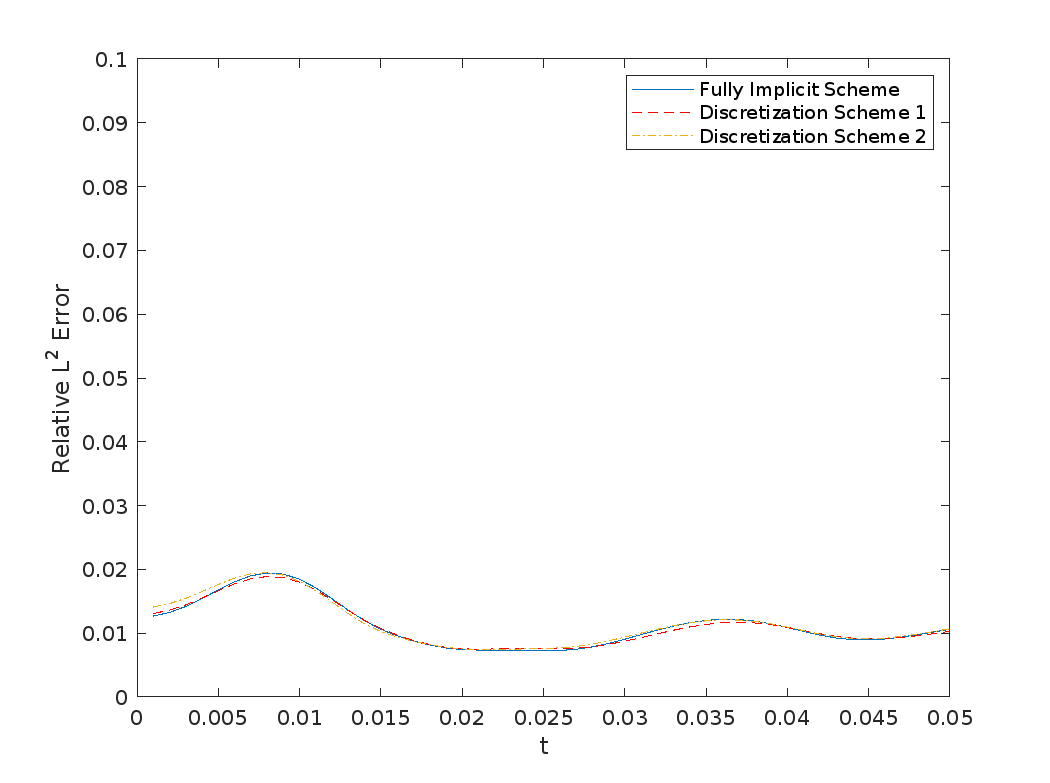}
  \caption{Relative $L^2$ error for different schemes when $H=1/20$ and $l=6$ in Example 3. Left: $e^{(1)}(t)$. Middle: $e^{(2)}(t)$. Right: $e^{(3)}(t)$.}
  \label{fig:Example_3_error_2}
\end{figure}

\subsection{Example 4: Point field with three mixed continua}

In this final example, we consider the coefficient field $\kappa$ as illustrated in Figure \ref{fig:Example_4_setting}, where the lowest value is $1$, the highest value is $10^3$, and the medium value is $4$. The three regions are respectively denoted by $\Omega_1$, $\Omega_2$ and $\Omega_3$. To demonstrate different splitting results, we define the continua in a new pattern. Specifically, in each coarse block, we set $\psi_1(x) = \mathds{1}_{\Omega_1 \cup \Omega_2}(x)$, $\psi_2(x) = \mathds{1}_{\Omega_1 \cup \Omega_3}(x)$, and $\psi_3(x) = \mathds{1}_{\Omega_2 \cup \Omega_3}(x)$. This means that the auxiliary functions represent the characteristic functions in the low-high-value, low-medium-value and medium-high-value regions, respectively. 

The results of the eigenvalue problems are presented in Table \ref{tab:Example_4_eigen}. We notice that our method can effectively separate the slow modes, corresponding to the two smaller eigenvalues, from the overall dynamics by taking linear combinations of the continua. These slow modes will be treated explicitly to reduce the computational cost. We remark that, due to the properties of the coefficient field, the critical eigenvalue in this example is higher than in Example 3, and therefore a finer time step size should be chosen to satisfy the stability condition. However, the previously chosen time step size $\tau$ is relatively loose and remains effective in this case.

The relative $L^2$ errors for different discretization schemes are shown in Figures \ref{fig:Example_4_error_1} and \ref{fig:Example_4_error_2}. Similar observations can be made as in Example 3. 

\begin{figure}
    \centering
    \includegraphics[width=7cm]{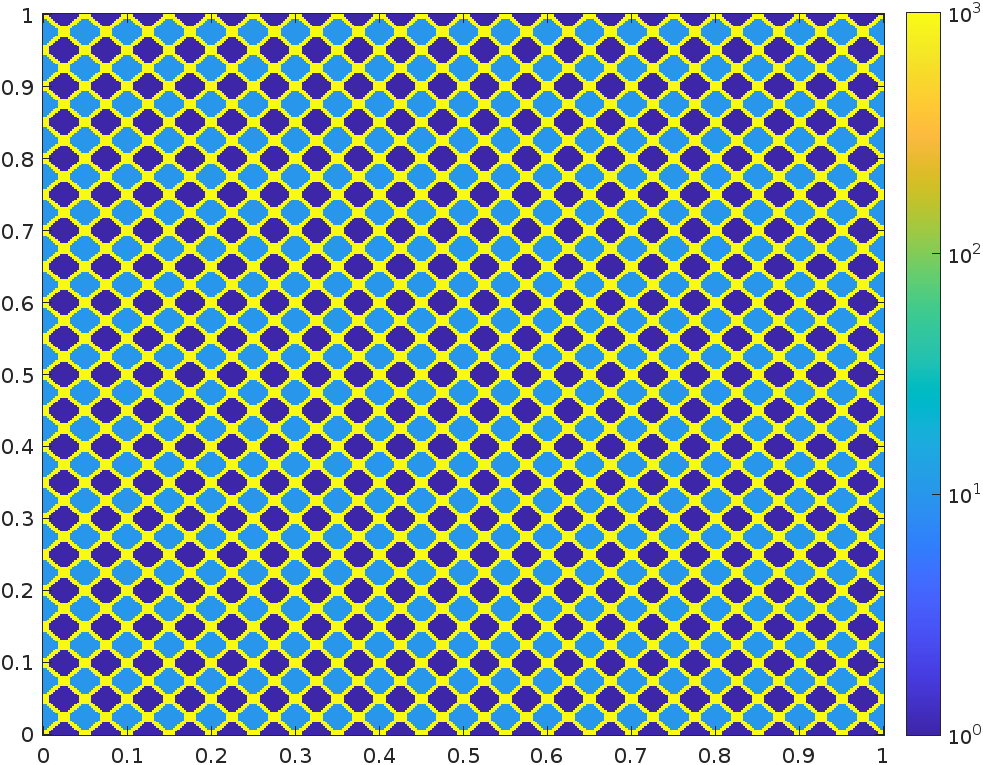}
    \quad
    \includegraphics[width=7cm]{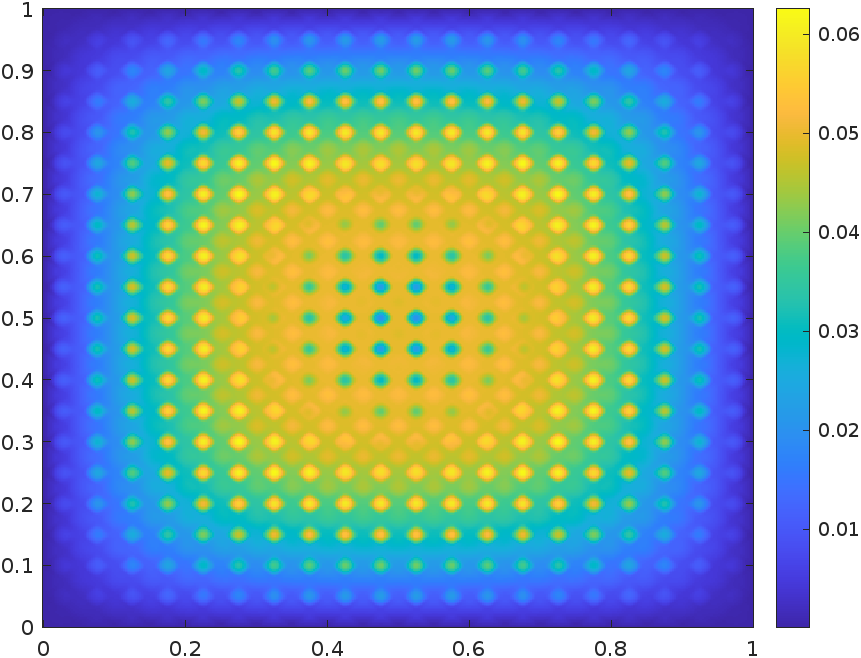}
    \caption{Left: Three-continuum field $\kappa$ in Example 4. Right: Reference solution at the final time $T$ in Example 4.}
    \label{fig:Example_4_setting}
\end{figure}

\begin{table}
    \centering
    \caption{Eigenvalues and corresponding eigenvectors of eigenvalue problems for different coarse mesh sizes $H$ in Example 4.}
    \begin{tabular}{|c|c|c|c|}
         \hline
         $H$ & $l$ & eigenvalues $\lambda_i$ &  eigenvectors $v_i$ \\
         \hline
         \multirow{3}*{1/10} & \multirow{3}*{5} & $40.5777$ & $(-0.4106, 0.1255, 0.6286)^T$ \\
         \cline{3-4}
         & & $144.3006$ & $(0.2038, 0.7969, 0.1353)^T$ \\
         \cline{3-4}
         & & $575.3214$ & $(1.0571, 0.7424, 0.9558)^T$ \\
         \hline
         \multirow{3}*{1/20} & \multirow{3}*{6} & $19.0248$ & $(-0.4488, 0.0985, 0.5934)^T$ \\
         \cline{3-4}
         & & $78.6168$ & $(0.3858, 0.9170, 0.3071)^T$ \\
         \cline{3-4}
         & & $442.8907$ & $(0.9887, 0.5928, 0.9384)^T$ \\
         \hline
    \end{tabular}   
    \label{tab:Example_4_eigen}
\end{table}

\begin{figure}
  \centering
  \includegraphics[width=5cm]{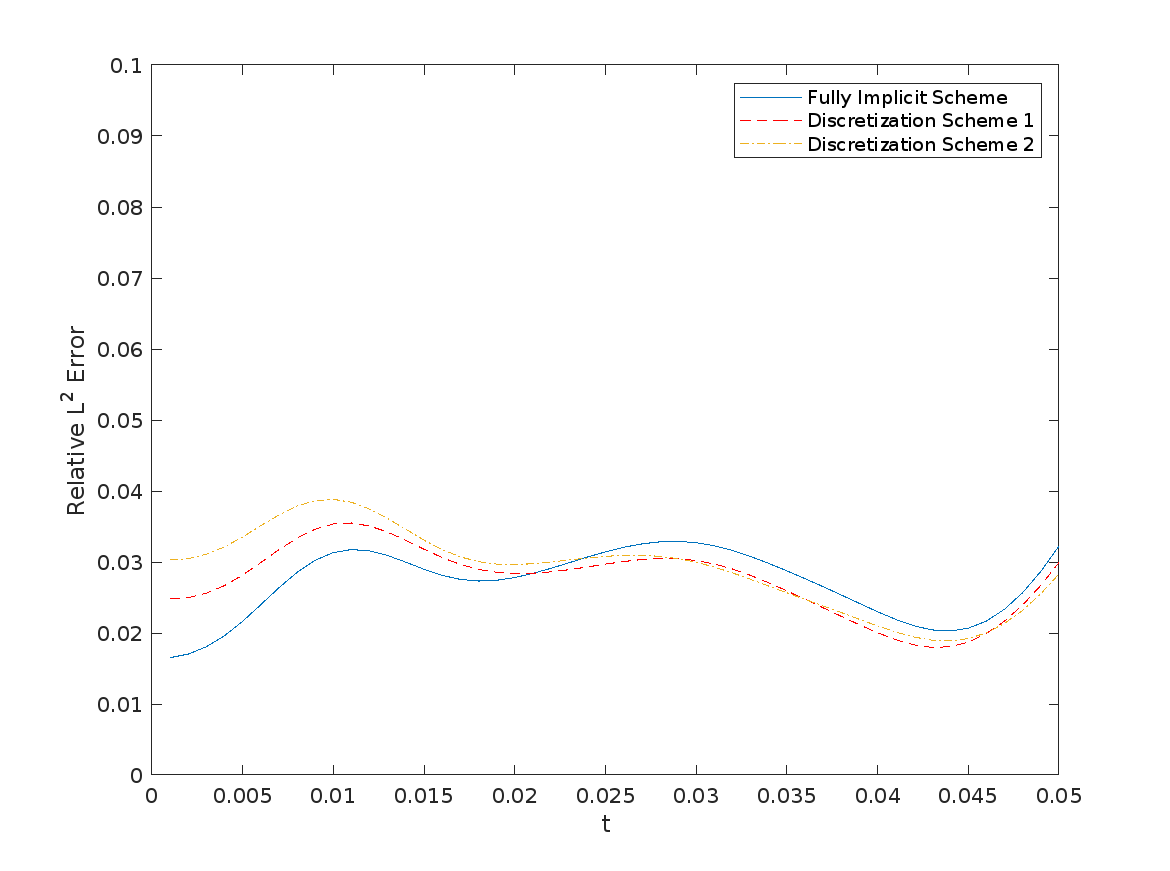}
  \quad
  \includegraphics[width=5cm]{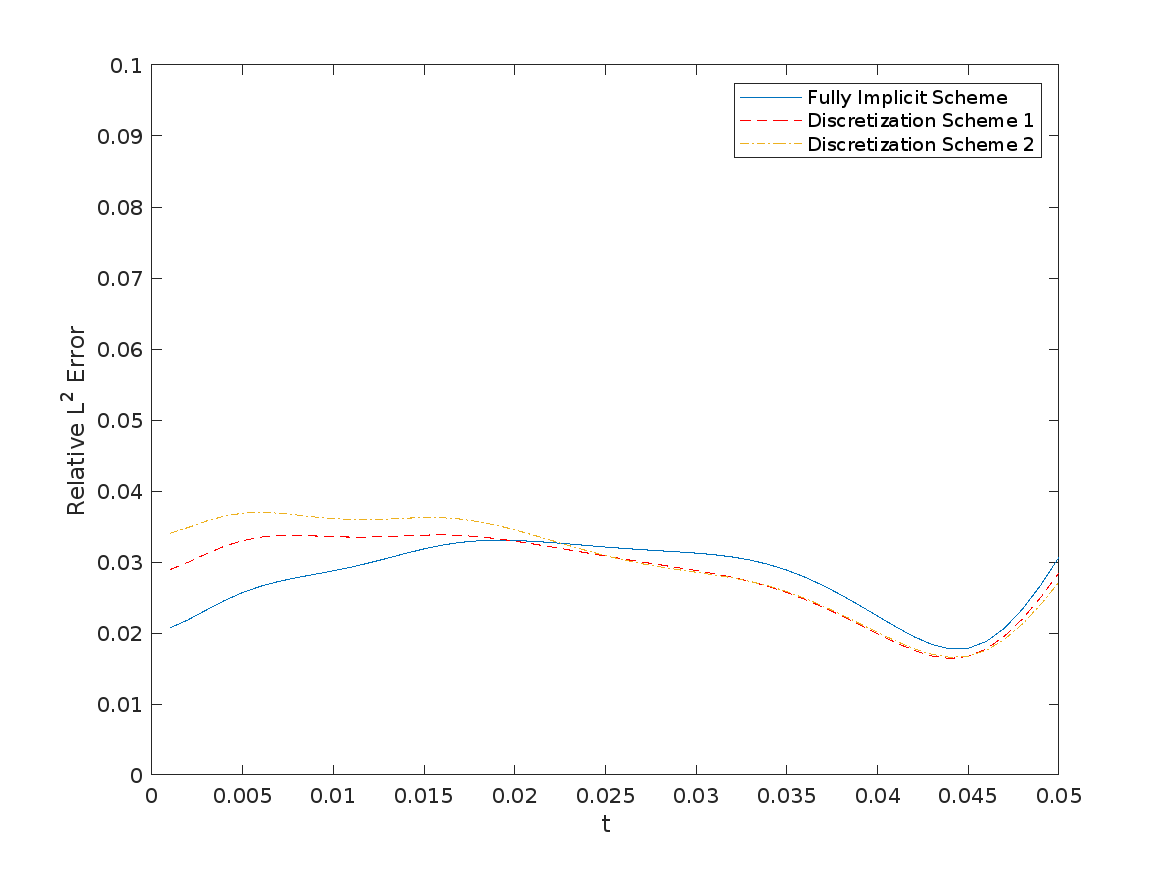}
  \quad
  \includegraphics[width=5cm]{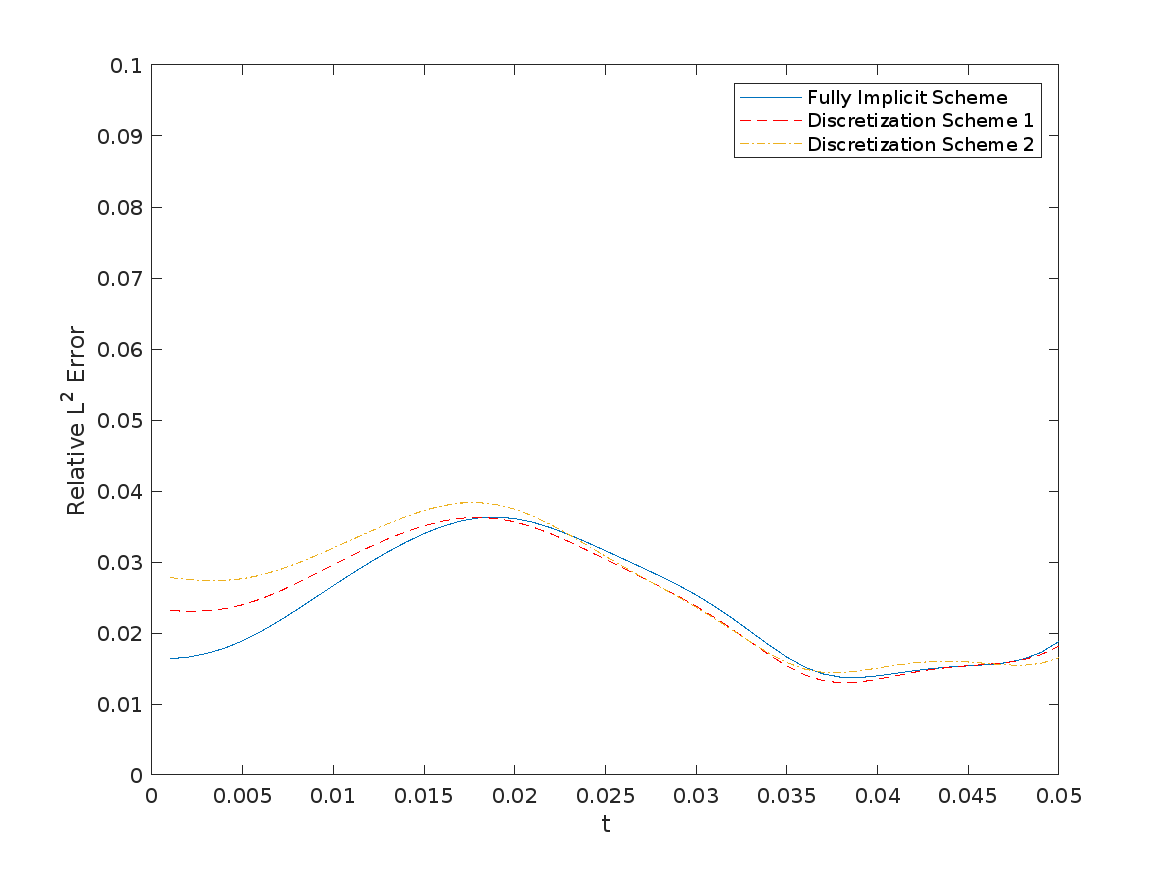}
  \caption{Relative $L^2$ error for different schemes when $H=1/10$ and $l=5$ in Example 4. Left: $e^{(1)}(t)$. Middle: $e^{(2)}(t)$. Right: $e^{(3)}(t)$.}
  \label{fig:Example_4_error_1}
\end{figure}

\begin{figure}
  \centering
  \includegraphics[width=5cm]{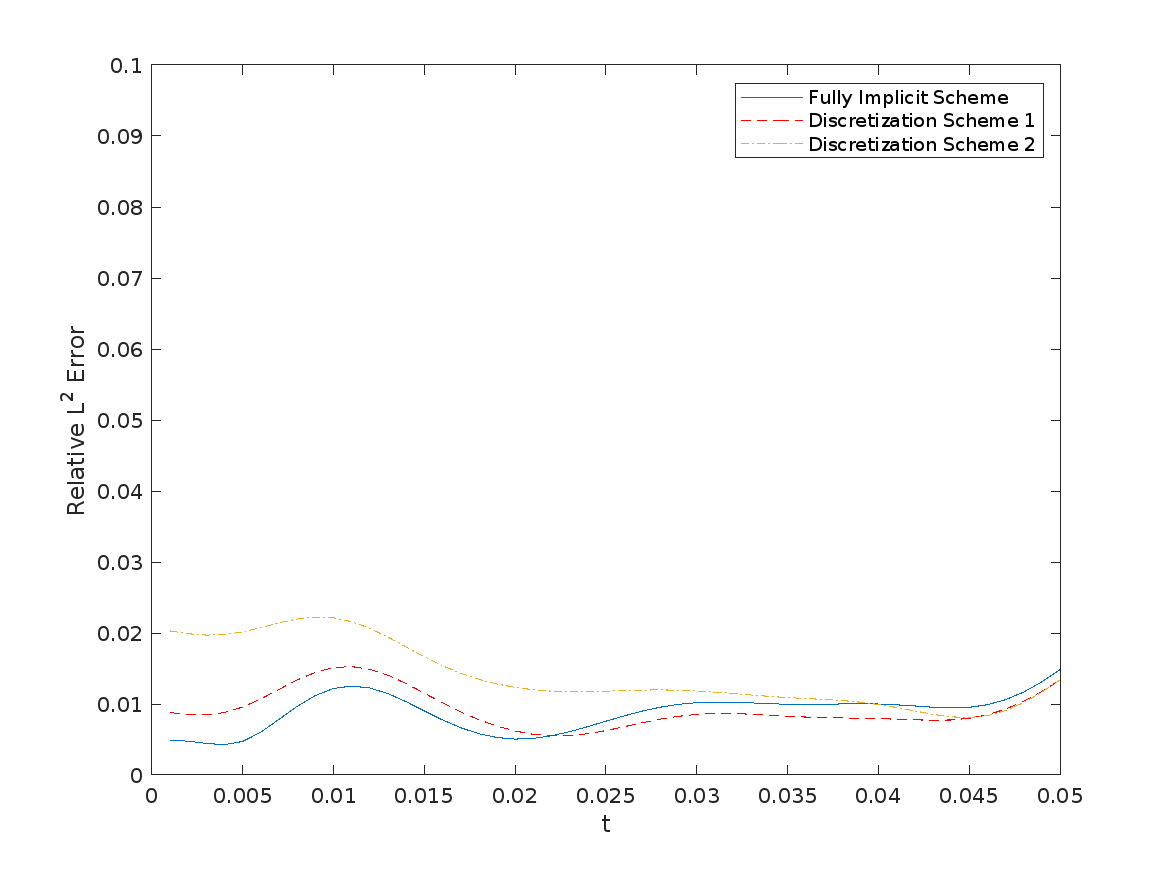}
  \quad
  \includegraphics[width=5cm]{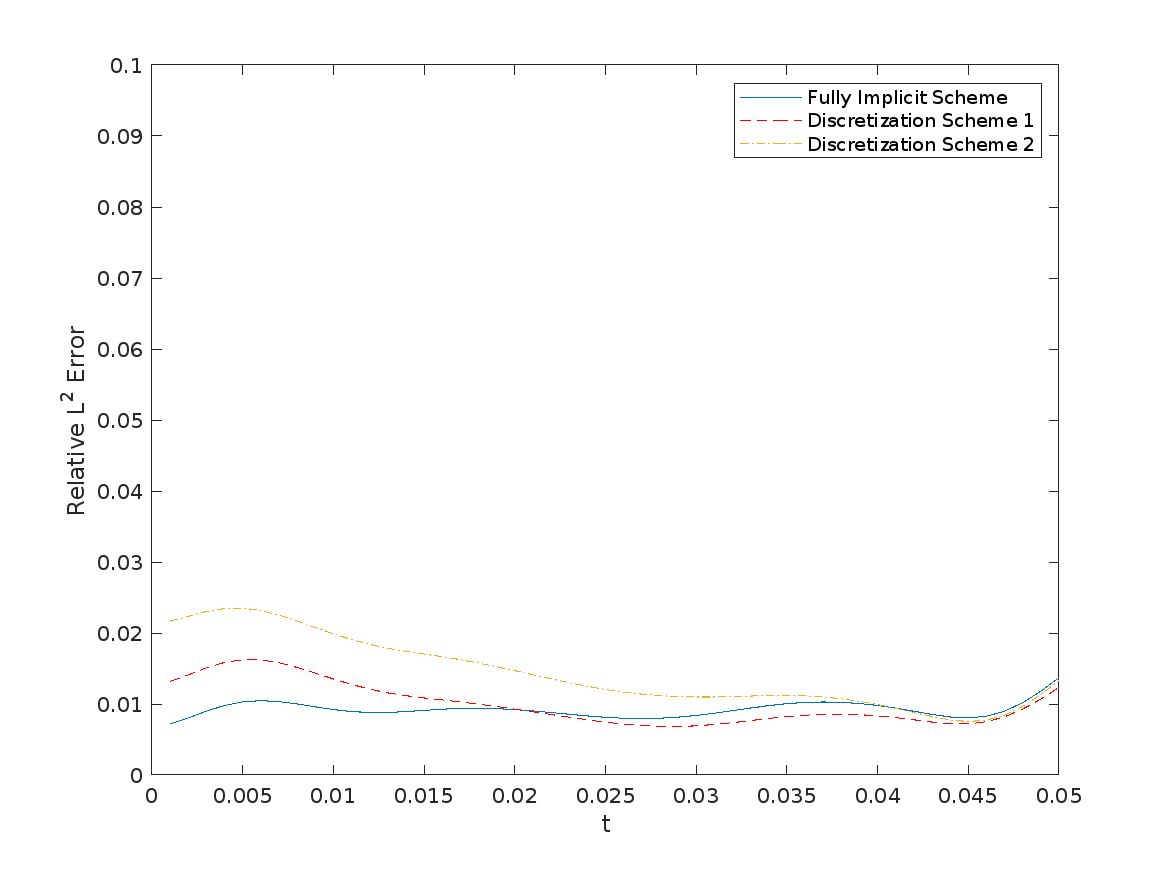}
  \quad
  \includegraphics[width=5cm]{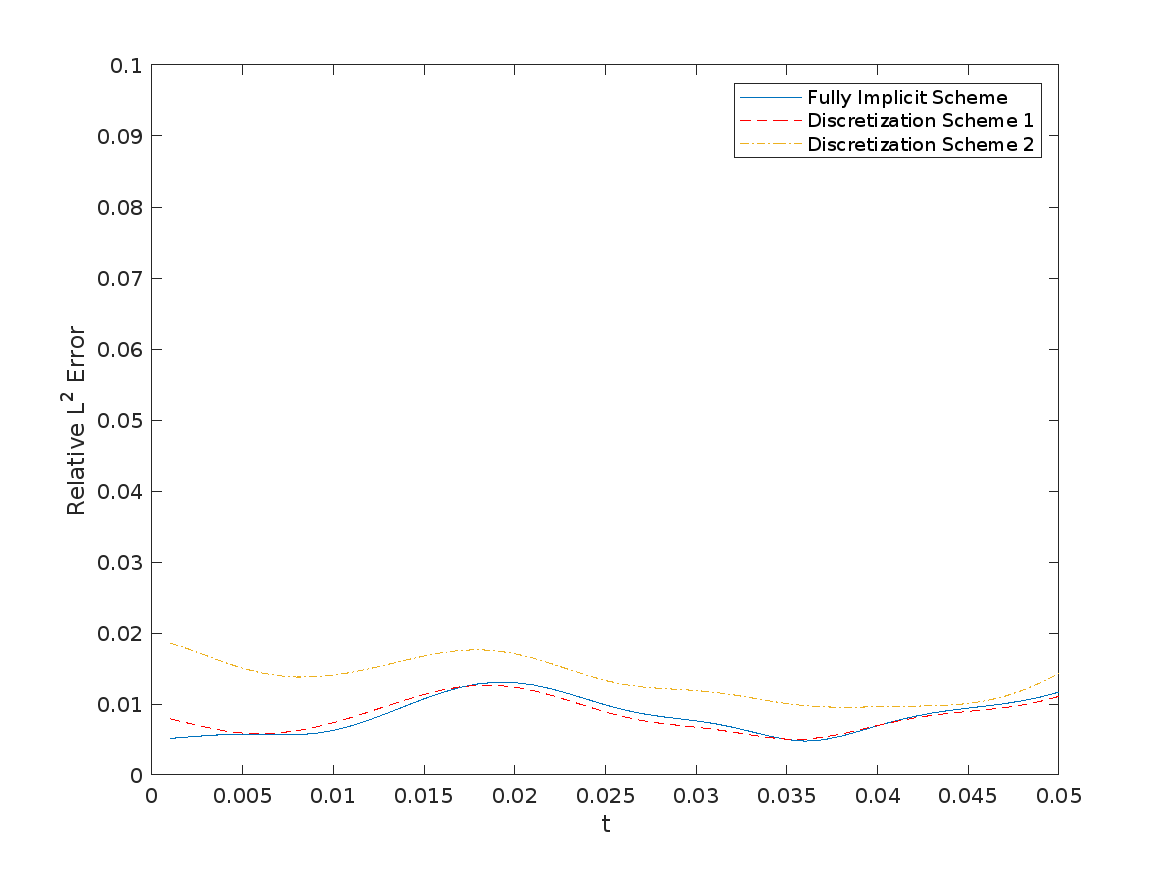}
  \caption{Relative $L^2$ error for different schemes when $H=1/20$ and $l=6$ in Example 4. Left: $e^{(1)}(t)$. Middle: $e^{(2)}(t)$. Right: $e^{(3)}(t)$.}
  \label{fig:Example_4_error_2}
\end{figure}

\section{Conclusions}
\label{sec:conclusions}

In this work, we propose multicontinuum splitting schemes for the wave equation with a high-contrast coefficient. This is an extension of our previous work on multiscale flow problems, including new time discretization schemes, the concept of discrete energy along with its conservation property, and a stability proof derived from this property.

Our approach is built upon the framework of multicontinuum homogenization and draws inspiration from splitting methods. 
First, we decompose the solution space into two components to separate fast and slow dynamics in the system. This decomposition is achieved by introducing physically meaningful macroscopic variables and utilizing the expansion in multicontinuum homogenization. 
Then in the formulated partially explicit time discretization schemes, the fast-dynamics (contrast-dependent) component is treated implicitly to ensure stability, while the slow-dynamics (contrast-independent) component is treated explicitly for computational efficiency. We introduce the concept of discrete energy and derive the stability conditions, which are independent of contrast when the continua are appropriately chosen. Additionally, strategies for optimizing the space decomposition are discussed.

Numerical examples are presented to validate the accuracy and stability of the proposed schemes. The results demonstrate that our proposed approach effectively separates the dynamics with different speeds, and achieves accuracy comparable to that of the implicit scheme. It balances the trade-off between accuracy and efficiency, offering greater stability than the explicit method and requiring less computationally effort than the implicit method.

\bibliographystyle{unsrt}
\bibliography{ref}

\end{document}